\definecolor{Green}{rgb}{0,0.922,0}
\definecolor{DarkGreen}{rgb}{0,0.5,0}
\definecolor{MildGreen}{rgb}{0,0.784,0}
\definecolor{NormalGreen}{rgb}{0,0.8,0}
\definecolor{Pink}{rgb}{1,0,1}
\definecolor{Cyan}{rgb}{0,1,1}
\definecolor{Yellow}{rgb}{1,1,0}
\definecolor{Gold}{rgb}{1,0.73,0}
\definecolor{lavender}{rgb}{0.45,0,1}
\DeclareFontFamily{U}{mathx}{}
\DeclareFontShape{U}{mathx}{m}{n}{<-> mathx10}{}
\DeclareSymbolFont{mathx}{U}{mathx}{m}{n}
\DeclareMathAccent{\widecheck}{0}{mathx}{"71}
\crefname{conjecture}{Conjecture}{Conjectures}
\newtheorem{theorem}{Theorem}[section]
\newtheorem{proposition}[theorem]{Proposition}
\newtheorem{corollary}[theorem]{Corollary}
\theoremstyle{definition}
\newtheorem{remark}[theorem]{Remark}
\newtheorem{example}[theorem]{Example}
\newcommand{\dfn}[1]{\textcolor{blue}{\emph{#1}}}
\newcommand{\BPT}{\mathsf{BPT}}
\newcommand{\DBPT}{\mathsf{DBPT}}
\newcommand{\FBPT}{\mathsf{FBPT}}
\newcommand{\Branch}{\mathsf{Branch}}
\newcommand{\A}{\mathcal{A}}
\newcommand{\T}{\mathbf{T}}
\newcommand{\ttt}{\boldsymbol{\tau}}
\newcommand{\N}{\mathbb{N}}
\newcommand{\1}{\mathbbm{1}}
\newcommand{\Sn}{\mathfrak{S}_n}
\newcommand{\Dn}{\mathfrak{D}_n}
\newcommand{\rr}{\mathrm{right}}
\newcommand{\pp}{\mathrm{two}}
\let\P\relax
\newcommand{\I}{\mathcal{I}}
\renewcommand{\L}{\mathcal{L}}
\newcommand{\P}{\mathcal{P}}
\newcommand{\IF}{\mathsf{IF}}
\newcommand{\NC}{\mathrm{NC}}
\newcommand{\II}{\mathrm{Int}}
\newcommand{\Des}{\mathrm{Des}}
\newcommand{\des}{\mathrm{des}}
\newcommand{\runs}{\mathrm{druns}}
\newcommand{\sw}{\mathrm{SE}} 
\newcommand{\Tan}{a}
\begin{document}

\title[]{Boolean, Free, and Classical Cumulants \\ as Tree Enumerations}
\subjclass[2010]{}

\author[]{Colin Defant}
\address[]{Department of Mathematics, Harvard University, Cambridge, MA 02138, USA}
\email{colindefant@gmail.com}

\author[]{Mitchell Lee}
\address[]{Department of Mathematics, Harvard University, Cambridge, MA 02138, USA}
\email{mitchell@math.harvard.edu}

\begin{abstract}
Defant found that the relationship between a sequence of (univariate) classical cumulants and the corresponding sequence of (univariate) free cumulants can be described combinatorially in terms of families of binary plane trees called \emph{troupes}. Using a generalization of troupes that we call \emph{weighted troupes}, we generalize this result to allow for multivariate cumulants. Our result also gives a combinatorial description of the corresponding Boolean cumulants. This allows us to answer a question of Defant regarding his \emph{troupe transform}. We also provide explicit distributions whose cumulants correspond to some specific weighted troupes. 
\end{abstract} 

\maketitle

\section{Introduction}\label{sec:intro}
Let $\mathcal A$ be a unital associative algebra over a commutative ring $\mathbb K$, and let $\varphi\colon\mathcal A\to\mathbb K$ be a unital linear functional. The pair $(\mathcal{A},\varphi)$ is called a  \dfn{noncommutative probability space} (over $\mathbb K$), and the functional $\varphi$ is called a \dfn{noncommutative expectation}. Let $(X_1, \ldots, X_n)$ be a tuple of elements of $\mathcal A$. We associate to this tuple three important quantities: the \emph{classical cumulant} $K_n(X_1, \ldots, X_n)$, the \emph{free cumulant} $R_n(X_1, \ldots, X_n)$, and the \emph{Boolean cumulant} $B_n(X_1, \ldots, X_n)$. The classical cumulant $K_n(X_1, \ldots, X_n)$ is informally a measure of the dependence of $X_1, \ldots, X_n$, whereas the free cumulant $R_n(X_1, \ldots, X_n)$ is an analogue used in the free probability theory of Voiculescu \cite{MingoSpeicher2017,Voiculescu1,Voiculescu2,Voiculescu3,Voiculescu4}. Classical, free, and Boolean cumulants are all related by formulas involving sums over particular sets of partitions of the set $[n]:=\{1, \ldots, n\}$ \cite{Arizmendi2015}.

If the variables $X_1, \ldots, X_n$ are all equal, then more is known. In 2013, Josuat-Verg\`es found a combinatorial expansion of the negative classical cumulant $-K_n(X, \ldots, X)$ into negative free cumulants $-R_i(X, \ldots, X)$ for $i \leq n$ \cite{Josuat2013}. In 2022, Defant proved that the coefficients of Josuat-Verg\`es's expansion count objects called \emph{valid hook configurations}, which also appear in the study of West's stack-sorting map \cite{Defant2022}. This connection allowed Defant to prove several results about the stack-sorting map. 

Defant also found that the relationship between free and classical cumulants is encoded by special families of binary plane trees called \emph{troupes}. Our main theorem (\cref{thm:big_theorem}) generalizes this result in three ways. First, our theorem deals with a new generalization of troupes that we call \emph{weighted troupes}. Second, our theorem does not require the elements $X_1,\ldots,X_n\in\A$ to be all equal. Third, our theorem handles Boolean cumulants in addition to classical and free cumulants. 

A troupe is a family of binary plane trees satisfying certain conditions related to an operation called \emph{insertion}. A \emph{branch} is a binary plane tree in which no vertex has more than one child. In \cite{Defant2022}, Defant proved that each troupe is uniquely determined by the set of branches that it contains and that every set of branches \emph{generates} a unique troupe. He then defined the \emph{troupe transform} to be the transform that takes as input a sequence $(\omega_n)_{n\geq 1}$ enumerating a set of branches and outputs the sequence $(\widecheck\omega_n)_{n\geq 1}$ enumerating the troupe generated by that set of branches. He asked what could be said about the relationship between the generating functions \[\mathscr B(x)=\sum_{n\geq 1}\omega_nx^n\quad\text{and}\quad \mathscr T(x)=\sum_{n\geq 1}\widecheck\omega_nx^n\] (\cite[Question~9.1]{Defant2022}). In particular, he asked what conditions on $\mathscr B(x)$ would guarantee that $\mathscr T(x)$ is algebraic. Using \cref{thm:big_theorem}, we completely resolve these questions. Namely, we show in \cref{thm:troupe-transform} that 
\[\mathscr T(x) = \mathscr B\left(\frac{x}{1 - x\mathscr T(x)}\right);\] this implies (see \cref{cor:algebraic}) that $\mathscr{T}(x)$ is algebraic if and only if $\mathscr{B}(x)$ is algebraic. (In fact, our \cref{thm:troupe-transform} is more general because it deals with weighted troupes instead of ordinary troupes.) 

\cref{sec:background} provides preliminaries concerning partitions, cumulants, and binary plane trees. In \cref{sec:insertion}, we define and provide examples of weighted troupes, and we discuss some of their properties. In particular, we show how a binary plane tree can be decomposed into branches called its \emph{insertion factors}. In \cref{sec:main_theorem}, we state and prove our main theorem (\cref{thm:big_theorem}) regarding weighted troupes and cumulants. In \cref{sec:troupe_transform}, we prove \cref{thm:troupe-transform}, which completely describes the troupe transform. \cref{sec:peaks} is devoted to explaining how one can easily compute the insertion factors of a decreasing binary plane tree directly from the permutation corresponding to it via the inorder bijection. Finally, in \cref{sec:distributions}, we compute explicit distributions whose cumulants correspond to some especially notable weighted troupes.

\section{Background}\label{sec:background}
\subsection{Partitions}
Let $Y$ be a finite set. A \dfn{partition} of $Y$ is a collection of nonempty disjoint sets, called \dfn{blocks}, whose union is $Y$. For any partition $\pi$ of $Y$, we define an equivalence relation $\sim_\pi$ on $Y$, where $i \sim_\pi j$ if and only if $i$ and $j$ are in the same block of $\pi$. In this way, partitions of $Y$ are in bijective correspondence with equivalence relations on $Y$. 

Suppose that $Y$ is totally ordered. We say that a partition $\pi$ of $Y$ is an \dfn{interval partition} if there do not exist $i, j, k \in Y$ with $i < j < k$ such that $i \sim_\pi k\not\sim_\pi j$. We say that $\pi$ is \dfn{noncrossing} if there do not exist $i, j, k, \ell \in Y$ with $i < j < k < \ell$ such that $i \sim_\pi k$, $j \sim_\pi \ell$, and $i \not \sim_\pi j$. We say that a noncrossing partition $\pi$ of $Y$ is \dfn{irreducible} if $\min Y \sim_\pi \max Y$. We write $\Pi(n)$, $\II(n)$, $\NC(n)$, and $\NC_{\text{irr}}(n)$ for the set of partitions of $[n]$, the set of interval partitions of $[n]$, the set of noncrossing partitions of $[n]$, and the set of irreducible noncrossing partitions of $[n]$, respectively. 

Let $\Sn$ denote the symmetric group whose elements are permutations of $[n]$. The \dfn{descent set} of a permutation ${\sigma = \sigma(1) \cdots \sigma(n) \in \Sn}$ is $\Des(w) = \{i \in [n-1] : \sigma(i) > \sigma(i+1)\}$. Let ${\des(\sigma)= |\Des(\sigma)|}$. The \dfn{descending runs} of $\sigma$ are the maximal consecutive decreasing subsequences of $\sigma$; note that $\sigma$ has $n-\des(\sigma)$ descending runs. Let $\runs(\sigma)$ be the partition \[\{D_1(\sigma),\ldots,D_{n-\des(\sigma)}(\sigma)\}\in\Pi(n),\] where $D_j(\sigma)$ is the set of numbers appearing in the $j$th descending run of $\sigma$. For example, 
\[\runs(854791632)=\{\{4,5,8\},\{7\},\{1,9\},\{2,3,6\}\}\in\Pi(9).\]

\subsection{Cumulants}
Let $(\A,\varphi)$ be a noncommutative probability space over a commutative ring $\mathbb K$. For $n \geq 1$, there are three multilinear functionals $K_n, R_n, B_n \colon \A^n \to \mathbb K$, called the \dfn{classical cumulant}, \dfn{free cumulant}, and \dfn{Boolean cumulant}, respectively.

For any set $U = \{u_1, \ldots, u_k\} \subseteq [n]$ with $u_1 < \cdots < u_k$ and any $X_1, \ldots, X_n \in \A$, define $K_U(X_1, \ldots, X_n) = K_k(X_{u_1}, \ldots, X_{u_k})$, and similarly define $R_U$ and $B_U$. For $\pi\in\Pi(n)$, define \[K_\pi(X_1, \ldots, X_n) = \prod_{U \in \pi} K_U(X_1, \ldots, X_n),\] and similarly define $R_\pi$ and $B_\pi$.

The functions $K_n,R_n,B_n\colon\A^n\to\mathbb K$ are uniquely determined by the following equations \cite[Theorem~2.5]{Arizmendi2015}:
\begin{align}
    \label{eq:classical}\varphi(X_1 \cdots X_n) &= \sum_{\pi \in \Pi(n)} K_\pi(X_1, \ldots, X_n), \\
    \label{eq:free} \varphi(X_1 \cdots X_n) &= \sum_{\pi \in \NC(n)} R_\pi(X_1, \ldots, X_n), \\
    \label{eq:Boolean} \varphi(X_1 \cdots X_n) &= \sum_{\pi \in \II_n} B_\pi(X_1, \ldots, X_n).
\end{align}

Let $X \in \A$. For $n \geq 0$, the $n$th \dfn{moment} of $X$ is $\varphi(X^n)$. The \dfn{moment-generating function} of $X$ is \[M_X(t) = \sum_{n \geq 0} \varphi(X^n) \frac{t^n}{n!},\] the exponential generating function of the moments of $X$. This is well defined as long as $\mathbb{K}$ contains $\mathbb{Q}$. The classical cumulants $K_n(X, \ldots, X)$ can be computed from $M_X(t)$ via the equation
\begin{equation}\label{eq:classical-egf}
    \sum_{n \geq 1} K_n(X, \ldots, X) \frac{t^n}{n!} = \log M_X(t).
\end{equation}

\subsection{Binary Plane Trees}\label{subsec:bpt}

A \dfn{binary plane tree} is a rooted tree $T$ in which each child of a vertex is designated as a left child or a right child and no vertex can have more than one left child or more than one right child. The empty binary plane tree is denoted $\varnothing$. When there is no risk of confusion, we also use the symbol $T$ to denote the set of vertices of the tree $T$. The \dfn{size} of $T$, denoted $|T|$, is the number of vertices in $T$. Let $\BPT$ denote the set of all binary plane trees (up to isomorphism), and let $\BPT_n$ denote the set of all binary plane trees of size $n$. It is a classical result that $|\BPT_n| = C_n$, where $C_n = \frac{1}{n+1}\binom{2n}{n}$ denotes the $n$th Catalan number. 

The \dfn{inorder} on a binary plane tree $T$ is the unique total order on the vertices of $T$ such that for each vertex $v$ of $T$, all the vertices in the left subtree of $v$ precede $v$, which in turn precedes all the vertices in the right subtree of $v$. The \dfn{postorder} on a binary plane tree $T$ is the unique total order on the vertices of $T$ such that for each vertex $v$ of $T$, all the vertices in the left subtree of $v$ precede all the vertices in the right subtree of $v$, which in turn precede $v$.

A \dfn{labeling} of a binary plane tree $T$ is a function $\L \colon T \to \N$. The \dfn{label} of a vertex $v$ under a labeling $\L$ is the integer $\L(v)$. We say that a labeling of $T$ is \dfn{standard} if it uses each of the labels $1, \ldots, |T|$ exactly once. A labeling of a binary plane tree is \dfn{decreasing} if the label of each vertex is greater than the labels of its children.

Given a binary plane tree $T$, we will now describe two important standard labelings of $T$. The \dfn{inorder labeling} of $T$, denoted $\I_T$, assigns the vertices of $T$ the labels $1, \ldots, n$ in inorder. The \dfn{postorder labeling} of $T$, denoted $\P_T$, assigns the vertices of $T$ the labels $1, \ldots, n$ in postorder. Note that the postorder labeling of $T$ is decreasing. See \cref{fig:inorder_postorder}. 

Let $\DBPT$ denote the set of all trees $T \in \BPT$ that are equipped with a standard decreasing labeling $\L$. Formally, $\DBPT$ is the set of all pairs $(T, \L)$, where $T \in \BPT$ and $\L$ is a standard decreasing labeling of $T$. However, we will usually refer to an element of $\DBPT$ using just the letter $T$ and leave the labeling $\L$ implicit. We call the elements of $\DBPT$ \dfn{decreasing binary plane trees}. Let $\DBPT_n$ denote the set of all decreasing binary plane trees of size $n$. There is an injection $\iota \colon \BPT \to \DBPT$ that equips each binary plane tree $T$ with its postorder labeling.

A \dfn{branch} is a nonempty binary plane tree in which every vertex has at most one child. Let $\Branch$ denote the set of all branches, and let $\Branch_n$ denote the set of all branches of size $n$. Clearly, we have $|\Branch_{n}| = 2^{n-1}$ for $n \geq 1$. 

\begin{figure}
  \begin{center}{\includegraphics[height=3.812cm]{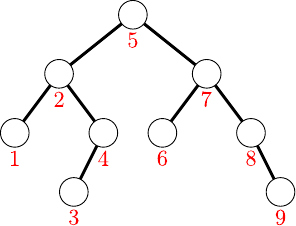}}\qquad\qquad\qquad\includegraphics[height=3.812cm]{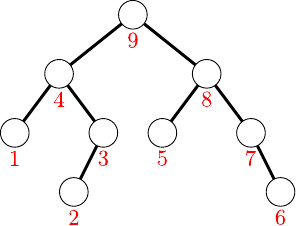}
  \end{center}
\caption{The inorder labeling (left) and postorder labeling (right) of a binary plane tree.}\label{fig:inorder_postorder}
\end{figure}

We now define functions $\alpha, \beta \colon \DBPT_n \to \Sn$. Given $(T,\L) \in \DBPT_n$ and a label $k \in [n]$, let \[\alpha(T,\L)(k) = \L(\I_T^{-1}(k))\quad\text{and}\quad\beta(T,\L)(k) = \L(\P_T^{-1}(k)).\] In other words, $\alpha(T)$ is the permutation formed by reading the labels of the vertices of $T$ in inorder, while $\beta(T)$ is the permutation formed by reading the labels of the vertices of $T$ in postorder.

It is well known that $\alpha$ is a bijection. West's \dfn{stack-sorting map} is the function $s \colon \Sn \to \Sn$ given by \[s(\sigma) = \beta(\alpha^{-1}(\sigma)).\]

Fix an index set $I$ (possibly infinite). An \dfn{$I$-coloring} of a binary plane tree $T$ is a function $\chi\colon T\sqcup\{\boxminus\}\to I$, where $\boxminus$ is a special symbol that denotes an element not in $T$. Let $\BPT(I)$ denote the set of all $I$-colored binary plane trees, let $\Branch(I)$ denote the set of all $I$-colored branches, and let $\DBPT(I)$ denote the set of all $I$-colored binary plane trees equipped with a decreasing labeling. We have an injection $\iota \colon \BPT(I) \to \DBPT(I)$ that assigns each $I$-colored binary plane tree the postorder labeling.

Much of the flavor of our results is retained if $I = \{\star\}$ is a singleton, so the reader may find it helpful to keep that special case in mind. In that case, $\BPT(I)$, $\Branch(I)$, and $\DBPT(I)$ are in obvious bijection with $\BPT$, $\Branch$, and $\DBPT$, respectively. When there is no risk of confusion, we will sometimes use the notations $\BPT$, $\Branch$, and $\DBPT$ as shorthand for $\BPT(\{\star\})$, $\Branch(\{\star\})$, and $\DBPT(\{\star\})$, respectively.

Given $i_1, \ldots, i_n \in I$, define $\DBPT(i_1, \ldots, i_n)$ to be the set of all $T \in \DBPT(I)$ such that $\chi(\boxminus) = i_n$ and such that $\chi(v) = i_{\L(v)}$ for all $v \in T$.

Define $\BPT(i_1, \ldots, i_n) = \iota^{-1}(\DBPT(i_1, \ldots, i_n))$. This is the set of all $I$-colored binary plane trees such that the colors of the vertices form the sequence $i_1, \ldots, i_{n-1}$ when read in postorder and such that the color of $\boxminus$ is $i_n$.

Define $\Branch(i_1, \ldots, i_n) = \Branch(I) \cap \BPT(i_1, \ldots, i_n)$.

Given a tree $T \in \DBPT$ and a vertex $v \in \DBPT$ with exactly one child, we can \dfn{swing} $T$ at $v$ by changing the subtree of $v$ from a left subtree to a right subtree or vice versa. In a similar way, we may swing trees in $\BPT$, $\Branch$, $\BPT(I)$, $\Branch(I)$, $\DBPT(I)$, $\BPT(i_1, \ldots, i_n)$, $\Branch(i_1, \ldots, i_n)$, and $\DBPT(i_1, \ldots, i_n)$.

\section{Insertion and Troupes}\label{sec:insertion}

Let $T_1, T_2 \in \BPT(I) \setminus \{\varnothing\}$, and let $v$ be a vertex of $T_1$. We will now define a new $I$-colored tree $\nabla_v(T_1, T_2) \in \BPT(I) \setminus \{\varnothing\}$, called the \dfn{insertion} of $T_2$ into $T_1$ at $v$. First, form a new tree $T_1^v$ from $T_1$ by extending $v$ into a left edge; identify $v$ with the bottom vertex of this left edge, and call the upper vertex $v^*$. Next, attach $T_2$ as the right subtree of $v^*$. To define the $I$-coloring $\chi$ of $\nabla_v(T_1, T_2)$, let $\chi_1$ and $\chi_2$ be the $I$-colorings of $T_1$ and $T_2$, respectively. For $u\in T_1\sqcup\{\boxminus\}$, let $\chi(u)=\chi_1$. For $u'\in T_2$, let $\chi(u')=\chi_2(u')$. Finally, let $\chi(v^*)=\chi_2(\boxminus)$. 

\begin{example}
Let $I$ be a $3$-element set whose elements are represented by the colors {\color{Cyan}cyan}, {\color{Yellow}yellow}, and {\color{Pink}magenta}. Consider the $I$-colored binary plane trees $T_1$ and $T_2$ shown on the left of \cref{fig:insertion}, where the color of $\boxminus$ in each $I$-colored tree is represented by a colored version of the symbol $\boxminus$ floating above and to the left of the root of the tree. Let $v\in T_1$ be as indicated. Then the insertion of $T_2$ into $T_1$ at $v$ is the $I$-colored binary plane tree shown on the right of \cref{fig:insertion}. 
\end{example}

\begin{figure}
  \begin{center}{\includegraphics[height=9.087cm]{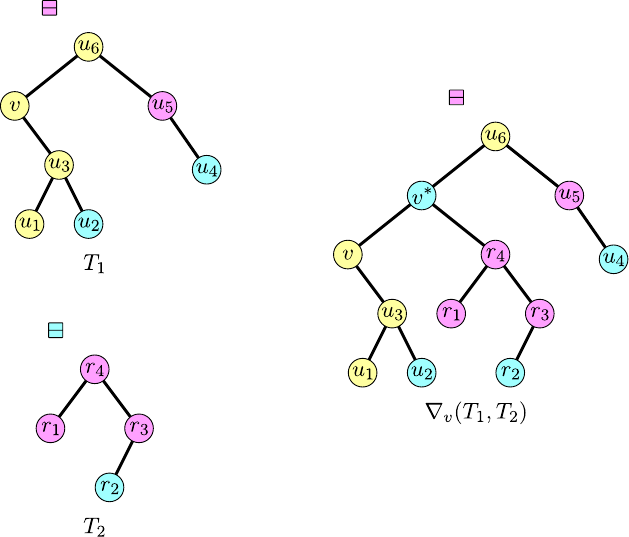}}
  \end{center}
\caption{Two $I$-colored binary planes trees $T_1$ and $T_2$ (left) and the insertion of $T_2$ into $T_1$ at the vertex $v$ (right).}\label{fig:insertion} 
\end{figure} 

Every tree $T \in \BPT(I) \setminus \{\varnothing\}$ can be formed from branches by repeatedly performing the insertion operation. We now describe how this is done. 
Let $\Lambda_T$ be the set consisting of $\boxminus$ and the vertices of $T$ with two children. Define the function $\gamma \colon T \cup \{\boxminus\} \to \Lambda_T$ as follows. If $u \in \Lambda_T$, then let $\gamma(u) = u$. If $u \not \in \Lambda_T$ and there is a vertex $v \in \Lambda_T \setminus \{\boxminus\}$ such that the right subtree of $v$ contains $u$, then let $\gamma(u)$ be the lowest such vertex $v$. Otherwise, let $\gamma(u) = \boxminus$. Define $\pi_T = \{\gamma^{-1}(v) : v \in \Lambda_T\}$; this is a partition of $T \cup \{\boxminus\}$.

Let $v \in \Lambda_T$. Define a binary plane tree $T_{v} \in \BPT(I) \setminus \{\varnothing\}$ on the vertex set $\gamma^{-1}(v) \setminus \{v\}$ as follows. For any $u, w \in \gamma^{-1}(v) \setminus \{v\}$, we have that $w$ is a left (respectively, right) child of $u$ in $T_{v}$ if and only if $w$ appears in the left (respectively, right) subtree of $u$ in $T$ and there is no element of $\gamma^{-1}(v)$ between $w$ and $u$ in $T$. Clearly, $T_{v}$ is a branch. The $I$-coloring of $T_{v}$ is defined as follows. For $u \in \gamma^{-1}(v) \setminus \{v\}$, the color of $u$ in $T_{v}$ is the same as its color in $T$. The color of $\boxminus$ in $T_{v}$ is the color of $v$ in $T$. We call the trees $T_{v} \in \Branch(I)$ for $v \in \Lambda_T$ the \dfn{insertion factors} of $T$. 

\begin{example}\label{exam:insertion_factors}
Let $T$ be the $I$-colored binary plane tree shown on the top of \cref{fig:if}. We have $\Lambda_T=\{r_4,r_8,r_9,\boxminus\}$, so $\pi_T=\{\gamma^{-1}(r_4),\gamma^{-1}(r_8),\gamma^{-1}(r_9),\gamma^{-1}(\boxminus)\}$, where 
\[\gamma^{-1}(r_4)=\{r_2,r_3,r_4\},\quad \gamma^{-1}(r_8)=\{r_6,r_7,r_8\},\quad \gamma^{-1}(r_9)=\{r_5,r_9\},\quad \gamma^{-1}(\boxminus)=\{r_1,\boxminus\}.\] The insertion factors of $T$ are $T_{r_4}$, $T_{r_8}$, $T_{r_9}$, and $T_\boxminus$, which are shown on the bottom of \cref{fig:if}. 
\end{example} 

\begin{figure}
  \begin{center}{\includegraphics[height=7.682cm]{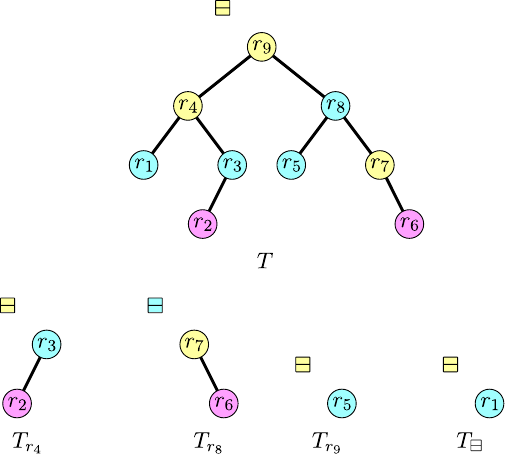}}
  \end{center}
\caption{An $I$-colored binary plane tree (top) and its insertion factors (bottom).}\label{fig:if} 
\end{figure}

Note that multiple insertion factors of $T$ may be isomorphic to each other. (In \cref{exam:insertion_factors}, $T_{r_9}$ and $T_\boxminus$ are isomorphic.) The collection of insertion factors of $T$, up to isomorphism, is a multiset $\IF(T) \subseteq \BPT(I)$. The following proposition is immediate from the definitions.

\begin{proposition}\label{prop:if}
    Let $T_1, T_2 \in \BPT(I) \setminus \{\varnothing\}$, and let $v$ be a vertex of $T_1$. Then $\IF(\nabla_v(T_1, T_2))$ is the multiset union of $\IF(T_1)$ and $\IF(T_2)$.
\end{proposition}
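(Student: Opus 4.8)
The plan is to track precisely how the combinatorial data $\Lambda$, $\gamma$, and $\pi$ of $T := \nabla_v(T_1, T_2)$ decompose in terms of those of $T_1$ and $T_2$, and then to check that the resulting branches agree up to color-preserving isomorphism. First I would record the structure of $T$: its vertex set is the disjoint union of the vertices of $T_1$, the new vertex $v^*$, and the vertices of $T_2$. By construction $v^*$ has the old subtree of $v$ (coming from $T_1$) as its \emph{left} subtree and has $T_2$ as its \emph{right} subtree, while every other parent--child relationship inherited from $T_1$ and $T_2$ is preserved. In particular, $v^*$ is the only vertex of $T$ with two children that is not already a two-child vertex of $T_1$ or of $T_2$, so
\[\Lambda_T = (\Lambda_{T_1} \setminus \{\boxminus\}) \sqcup \{v^*\} \sqcup (\Lambda_{T_2} \setminus \{\boxminus\}) \sqcup \{\boxminus\}.\]

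The key step is to analyze $\gamma$ for $T$. Because the subtree of $v$ becomes the \emph{left} subtree of $v^*$, the right subtree of $v^*$ in $T$ consists of exactly the vertices of $T_2$, so $v^*$ never captures a vertex of $T_1$. From this I would deduce two facts. For a vertex $u$ of $T_1$, the two-child vertices of $T$ whose right subtree contains $u$ are exactly those of $T_1$, so $\gamma_T(u) = \gamma_{T_1}(u)$ when $\gamma_{T_1}(u) \neq \boxminus$ and $\gamma_T(u) = \boxminus$ otherwise. For a vertex $u$ of $T_2$, the two-child vertices of $T$ below $v^*$ whose right subtree contains $u$ are exactly those of $T_2$, with $v^*$ now playing the role formerly played by $\boxminus$ in $T_2$; thus $\gamma_T(u) = \gamma_{T_2}(u)$ when $\gamma_{T_2}(u) \neq \boxminus$ and $\gamma_T(u) = v^*$ otherwise. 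Unwinding these identities shows that the blocks of $\pi_T$ are exactly the blocks of $\pi_{T_1}$ together with the blocks of $\pi_{T_2}$, where the $\boxminus$-block of $T_1$ is reindexed by $\boxminus$ in $T$ and the $\boxminus$-block of $T_2$ is reindexed by $v^*$.

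It then remains to verify that the branches attached to matched blocks are isomorphic as $I$-colored trees. For a block coming from a two-child vertex $w$ of $T_1$ or $T_2$, the underlying vertex set and all left/right incidences are unchanged, so the insertion factor is literally identical; the only points requiring the coloring rule of $\nabla_v$ are the two $\boxminus$-blocks. There I would invoke $\chi(\boxminus) = \chi_1(\boxminus)$ on the $T_1$-side and $\chi(v^*) = \chi_2(\boxminus)$, which are exactly the colors assigned to the root symbol $\boxminus$ of the branches $(T_1)_{\boxminus}$ and $(T_2)_{\boxminus}$; hence $T_{\boxminus} \cong (T_1)_{\boxminus}$ and $T_{v^*} \cong (T_2)_{\boxminus}$. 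Collecting the matched branches yields $\IF(T) = \IF(T_1) \uplus \IF(T_2)$ as multisets. The main obstacle is purely bookkeeping around the symbol $\boxminus$ and the new vertex $v^*$: one must use that ``extending $v$ into a left edge'' places the old subtree of $v$ to the \emph{left} of $v^*$, since this is precisely what prevents $v^*$ from absorbing any $T_1$-vertex into its right subtree and is what makes the $\gamma$-analysis split cleanly between $T_1$ and $T_2$.
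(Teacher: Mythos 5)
Your proposal is correct, and it is exactly the careful unwinding of the definitions that the paper has in mind when it declares the proposition ``immediate from the definitions'' (the paper gives no written proof). Your bookkeeping of $\Lambda_T$, $\gamma_T$, and the colors of the two $\boxminus$-blocks checks out, so this would serve as a full written-out version of the omitted argument.
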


\Cref{prop:if} implies that every tree can be formed from its insertion factors by repeatedly performing the insertion operation.

\begin{remark}
Let us make an analogy between binary plane trees and classical algebraic structures (such as modules over a ring). One should view $I$-colored branches as the ``indecomposable'' objects of $\BPT(I)\setminus \{\varnothing\}$. Each tree $T\in\BPT(I) \setminus \{\varnothing\}$ can be obtained from a sequence of $I$-colored branches via iterated insertions. Such a sequence of $I$-colored branches is not necessarily unique. However, \cref{prop:if} implies that the multiset of $I$-colored branches appearing in the sequence is uniquely determined by $T$ (it is $\IF(T)$). Thus, one can view \cref{prop:if} as an analogue of the Jordan--H\"older theorem for binary plane trees.   
\end{remark}

A \dfn{troupe} is a subset $\T \subseteq \BPT(I)\setminus\{\varnothing\}$ such that for all $T_1, T_2 \in \BPT(I)$ and all vertices $v \in T_1$, we have $\nabla_v(T_1, T_2) \in \T$ if and only if $T_1 \in \T$ and $T_2 \in \T$. Troupes were introduced in \cite{Defant2022}; we generalize them as follows. Let $\mathbb K$ be a commutative ring. A \dfn{weighted troupe} is a function $\ttt \colon \BPT(I) \to \mathbb K$ with the following properties:
\begin{itemize}
\item $\ttt(\varnothing)=0$;
\item For all $T_1, T_2 \in \BPT(I)$ and all vertices $v \in T_1$, we have $\ttt(\nabla_v(T_1, T_2)) = \ttt(T_1)\ttt(T_2)$.
\end{itemize} 
Observe that the indicator function of a troupe is a weighted troupe.

Let $\ttt\vert_{\Branch(I)}$ denote the restriction of a weighted troupe $\ttt$ to $\Branch(I)$. The following result is a generalization of \cite[Theorem~2.3]{Defant2022} to weighted troupes.

\begin{proposition}\label{thm:JordanHolder}
The map $\ttt\mapsto\ttt\vert_{\Branch(I)}$ is a bijection from the set of weighted troupes to the set of functions from $\Branch(I)$ to $\mathbb K$.
\end{proposition}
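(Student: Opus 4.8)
The plan is to prove that the restriction map $\ttt \mapsto \ttt\vert_{\Branch(I)}$ is a bijection by exhibiting an explicit two-sided inverse. The key structural input is \cref{prop:if}, which tells us that the insertion factors $\IF(T)$ are well defined and that insertion corresponds to taking multiset unions of insertion factors. The defining multiplicativity property of a weighted troupe says precisely that $\ttt$ converts the insertion operation into multiplication in $\mathbb K$. Iterating this, for any $T \in \BPT(I) \setminus \{\varnothing\}$ built from its insertion factors $B_1, \ldots, B_k$ (listed with multiplicity), we must have
\[
\ttt(T) = \prod_{j=1}^{k} \ttt(B_j) = \prod_{B \in \IF(T)} \ttt(B),
\]
where the product ranges over the multiset $\IF(T)$. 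This formula shows that a weighted troupe is completely determined by its values on branches, which gives injectivity of the restriction map.

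For surjectivity, I would take an arbitrary function $f \colon \Branch(I) \to \mathbb K$ and define $\ttt \colon \BPT(I) \to \mathbb K$ by $\ttt(\varnothing) = 0$ and, for nonempty $T$,
\[
\ttt(T) = \prod_{B \in \IF(T)} f(B).
\]
I then need to verify that this $\ttt$ is genuinely a weighted troupe. The condition $\ttt(\varnothing) = 0$ holds by construction. For the multiplicativity condition, let $T_1, T_2 \in \BPT(I)$ and $v \in T_1$; since insertion requires both trees to be nonempty, both $T_1$ and $T_2$ are nonempty, and \cref{prop:if} gives $\IF(\nabla_v(T_1, T_2)) = \IF(T_1) \sqcup \IF(T_2)$ as a multiset union. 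Because the product defining $\ttt$ factors over multiset union, we immediately obtain $\ttt(\nabla_v(T_1,T_2)) = \ttt(T_1)\,\ttt(T_2)$, as required. Finally, I would check that restricting this $\ttt$ back to $\Branch(I)$ recovers $f$: this follows because a branch $B$ has two children at no vertex, so its only insertion factor is itself, i.e. $\IF(B) = \{B\}$, whence $\ttt(B) = f(B)$. This shows the restriction map is surjective with the displayed product formula as its inverse.

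The only genuine subtlety — which I would address carefully rather than leave implicit — is the well-definedness of the product $\prod_{B \in \IF(T)} f(B)$ and its behavior under insertion. Since $T$ is finite, $\IF(T)$ is a finite multiset, so the product is a finite product in the commutative ring $\mathbb K$ and is unambiguously defined regardless of the order in which we multiply. The multiset-union identity of \cref{prop:if} is exactly what makes the product multiplicative under insertion; this is the crux of the argument, but it is handed to us by the proposition, so the remaining verifications are routine bookkeeping. One should also note the degenerate edge cases: a single-vertex tree is a branch whose sole insertion factor is itself, and the empty tree is excluded from the product formula by the separate stipulation $\ttt(\varnothing) = 0$, consistently with the first axiom for weighted troupes.

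I expect the main obstacle to be purely expository rather than mathematical: one must state the iterated product formula $\ttt(T) = \prod_{B \in \IF(T)} f(B)$ cleanly and justify that it is forced on any weighted troupe, invoking \cref{prop:if} to guarantee that the multiset $\IF(T)$ (and hence the product) does not depend on the particular sequence of insertions used to build $T$. Once that formula is in place, both directions of the bijection are short, and the whole argument reduces to checking that the product construction satisfies the two defining axioms and is inverse to restriction.
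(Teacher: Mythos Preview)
Your proposal is correct and takes essentially the same approach as the paper: both arguments reduce the proposition to the product formula $\ttt(T) = \prod_{B \in \IF(T)} \ttt(B)$, justified by \cref{prop:if}, which simultaneously forces injectivity and provides the inverse for surjectivity. The paper compresses this into a single characterization (``$\ttt$ is a weighted troupe if and only if $\ttt(\varnothing)=0$ and the product formula holds''), whereas you spell out the two directions separately, but the content is the same.
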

\begin{proof}
Let $\ttt \colon \BPT(I) \to \mathbb K$ be a function. By \cref{prop:if}, we have that $\ttt$ is a weighted troupe if and only if $\ttt(\varnothing) = 0$ and \[\ttt(T) = \prod_{T' \in \IF(T)} \ttt(T')\] for all $T \in \BPT(I) \setminus \{\varnothing\}$. The proposition follows.
\end{proof}

For $\ttt \colon \BPT(I) \to \mathbb K$ and $T \in \DBPT(I)$, we will, slightly abusing notation, write $\ttt(T)$ to denote the result of applying $\ttt$ to the underlying (unlabeled) tree of $T$.

Let us briefly mention some examples of troupes and weighted troupes. We will revisit some of these examples in \cref{sec:distributions}.
\begin{enumerate}
\item The simplest example of a troupe is $\BPT(I)\setminus\{\varnothing\}$; its set of branches is the entire set $\Branch(I)$. \item A nonempty binary plane tree is called \dfn{full} if no vertex has exactly one child. The set of full $I$-colored binary plane trees is a troupe; its set of branches is $\Branch_1(I)$. 
\item A \dfn{Motzkin tree} is a nonempty binary plane tree in which every vertex that has a right child also has a left child. The set of $I$-colored Moztkin trees is a troupe; its set of branches is precisely the set of nonempty $I$-colored branches with no right edges. 
\item Fix a subset $J\subseteq I$. Let ${\bf T}$ be the set of nonempty $I$-colored binary plane trees such that every vertex with a left child is assigned a color from $J$ and $\boxminus$ is also assigned a color from $J$. Then ${\bf T}$ is a troupe. 
\item Let $t_1$ and $t_2$ be indeterminates. Let $\rr(T)$ and $\pp(T)$ denote the number of right edges and the number of vertices with two children, respectively, in a binary plane tree $T$. If ${\bf T}$ is a troupe, then the map $\ttt\colon\BPT(I)\to\mathbb C[t_1,t_2]$ defined by 
\[\ttt(T)=\1_{\T}(T)t_1^{\rr(T)+1}t_2^{\pp(T)+1}\] is a weighted troupe. 
\item Let $t$ be an indeterminate, and let $J\subseteq I$. Let $\T$ be a troupe, and define $\ttt\colon\BPT(I)\to\mathbb C[t]$ by 
\[\ttt(T)=\1_{\T}(T)t^{|\chi^{-1}(J)|},\] where $\chi$ is the $I$-coloring of $T$. Then $\ttt$ is a weighted troupe. 
\end{enumerate}

\section{Boolean, Free, and Classical Cumulants as Tree Enumerations}\label{sec:main_theorem}

The following theorem generalizes \cite[Theorem~6.1]{Defant2022} in three ways. First, it deals with weighted troupes rather than (ordinary) troupes. Second, it involves potentially mixed cumulants rather than only cumulants of a single variable. Third, it includes a condition about Boolean cumulants in addition to the conditions about classical and free cumulants. If $I = \{\star\}$ is a singleton, 
then we recover \cite[Theorem~6.1]{Defant2022} from parts \eqref{item1} and \eqref{item2} in the following theorem. We encourage the reader to refer to \cref{exam:Psi,exam:Phi} while reading the following proof. 

\begin{theorem}\label{thm:big_theorem}
    Let $(\A, \varphi)$ be a noncommutative probability space over a commutative ring $\mathbb K$. Let $I$ be an index set, and let $(X_i)_{i \in I}$ be a sequence of elements of $\A$ indexed by the set $I$. Let $\ttt \colon \BPT(I) \to \mathbb K$ be a weighted troupe. The following are equivalent:
    \begin{enumerate}[(i)]
        \item\label{item1} For all $n \geq 1$ and all $i_1, \ldots, i_n \in I$, we have \[-K_n(X_{i_1}, \ldots, X_{i_n}) = \sum_{T \in \DBPT(i_1, \ldots, i_n)} \ttt(T).\]
        \item\label{item2} For all $n \geq 1$ and all $i_1, \ldots, i_n \in I$, we have \[-R_n(X_{i_1}, \ldots, X_{i_n}) = \sum_{T \in \BPT(i_1, \ldots, i_n)} \ttt(T).\] 
        \item\label{item3} For all $n \geq 1$ and all $i_1, \ldots, i_n \in I$, we have \[-B_n(X_{i_1}, \ldots, X_{i_n}) = \sum_{T \in \Branch(i_1, \ldots, i_n)} \ttt(T).\] 
    \end{enumerate}
\end{theorem}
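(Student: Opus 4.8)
The plan is to read all three conditions as the single assertion that $\varphi$ and one ``tree moment'' $\widetilde M$ have the same cumulants. For $V=\{v_1<\dots<v_k\}\subseteq[n]$ write $(i_V)=(i_{v_1},\dots,i_{v_k})$ and set $\mathsf b(i_V)=\sum_{B\in\Branch(i_V)}\ttt(B)$, $\mathsf r(i_V)=\sum_{T\in\BPT(i_V)}\ttt(T)$, $\mathsf k(i_V)=\sum_{T\in\DBPT(i_V)}\ttt(T)$, so that \eqref{item1}, \eqref{item2}, \eqref{item3} read $-K_n=\mathsf k$, $-R_n=\mathsf r$, and $-B_n=\mathsf b$. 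Define the array
\[
\widetilde M(i_1,\dots,i_n)=\sum_{\pi\in\II(n)}\prod_{V\in\pi}\bigl(-\mathsf b(i_V)\bigr).
\]
Reading \eqref{eq:Boolean} backwards, $-\mathsf b$ is exactly the family of Boolean cumulants attached to $\widetilde M$. Since \eqref{eq:classical}, \eqref{eq:free}, \eqref{eq:Boolean} are triangular in $n$ (the top cumulant occurs with coefficient $1$), each cumulant family determines and is determined by the underlying moment array. Hence, once I show that $-\mathsf r$ and $-\mathsf k$ are the free and classical cumulants of $\widetilde M$, each of \eqref{item1}--\eqref{item3} asserts that one cumulant family of $\varphi$ agrees with the corresponding family of $\widetilde M$, so all three become equivalent to the single equality $\varphi(X_{i_1}\cdots X_{i_n})=\widetilde M(i_1,\dots,i_n)$, and in particular to one another.

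Identifying $-\mathsf r$ and $-\mathsf k$ as the free and classical cumulants of $\widetilde M$ comes down to two combinatorial identities,
\[
\mathsf r(j_1,\dots,j_m)=\sum_{\pi\in\NC_{\mathrm{irr}}(m)}\prod_{V\in\pi}\mathsf b(j_V)
\qquad\text{and}\qquad
\mathsf k(j_1,\dots,j_m)=\sum_{\pi\in\Pi_{\mathrm c}(m)}\prod_{V\in\pi}\mathsf b(j_V),
\]
where $\Pi_{\mathrm c}(m)$ is the set of \emph{connected} partitions of $[m]$, namely those no proper initial segment $\{1,\dots,\ell\}$ of which is a union of blocks. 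Granting these, the standard inter-cumulant relations \cite{Arizmendi2015}, which express the free and classical cumulants of any array as $\sum_{\pi\in\NC_{\mathrm{irr}}}(-1)^{|\pi|-1}\prod(\text{Boolean cumulants})$ and $\sum_{\pi\in\Pi_{\mathrm c}}(-1)^{|\pi|-1}\prod(\text{Boolean cumulants})$, yield for the free cumulants of $\widetilde M$
\[
\sum_{\pi\in\NC_{\mathrm{irr}}(m)}(-1)^{|\pi|-1}\prod_{V\in\pi}\bigl(-\mathsf b(j_V)\bigr)=-\sum_{\pi\in\NC_{\mathrm{irr}}(m)}\prod_{V\in\pi}\mathsf b(j_V)=-\mathsf r(j_1,\dots,j_m),
\]
and likewise $-\mathsf k$ for the classical cumulants, exactly as needed.

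I would prove each identity by a weight-preserving bijection built from insertion factors, sending a tree $T$ to the pair $(\pi_T,(T_v)_{v\in\Lambda_T})$ consisting of its insertion-factor partition and its insertion factors. By \cref{prop:if} this is weight-preserving, $\ttt(T)=\prod_{v\in\Lambda_T}\ttt(T_v)$, and since every insertion factor is a branch of size one less than its block, blocks of size $1$---on which $\mathsf b$ vanishes---never occur. For the free identity I identify $T\cup\{\boxminus\}$ with $[m]$ by postorder (with $\boxminus\mapsto m$); the nesting of right subtrees makes $\pi_T$ noncrossing, and because the first vertex in postorder lies in no vertex's right subtree it shares a block with $\boxminus$, forcing $\min\sim_{\pi_T}\max$ and hence irreducibility. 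Conversely, an irreducible noncrossing partition together with a branch on each block reconstructs a unique tree by iterated insertion, and bijectivity follows by induction on $|T|$, peeling off one insertion $\nabla_v(T_1,T_2)$ and applying \cref{prop:if}.

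The classical identity is the analogue for $\DBPT(j_1,\dots,j_m)$, but with $T\cup\{\boxminus\}$ now indexed by the \emph{decreasing} labels, and I expect it to be the main obstacle. The decreasing labeling no longer respects the right-subtree nesting, so $\pi_T$ need only be connected---crossings appear, and the minimal label need not lie in the block of $\boxminus$---and one must check that, as $T$ ranges over $\DBPT(j_1,\dots,j_m)$, the partitions $\pi_T$ realize every connected partition of $[m]$, with each fiber parametrized exactly by the choice of branches on the blocks. The most reliable route is to transport the problem through the inorder bijection $\alpha$ and read $\pi_T$ off the resulting permutation, which is precisely the computation carried out in \cref{sec:peaks}; the descending-run partition $\runs$ recorded there is the natural bookkeeping that pairs decreasing trees with connected partitions. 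By contrast, the free case is comparatively transparent, because postorder and the right-subtree nesting are perfectly adapted to noncrossingness.
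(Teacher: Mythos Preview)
Your overall architecture---reduce all three conditions to the equality of $\varphi$ with a single ``tree moment'' array $\widetilde M$---is sound, and your treatment of the free case \eqref{item2}$\Leftrightarrow$\eqref{item3} is essentially the paper's bijection $\Psi$.

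The genuine gap is in the classical case. The formula you invoke,
\[
K_m = \sum_{\pi\in\Pi_{\mathrm c}(m)}(-1)^{|\pi|-1}\prod_{V\in\pi} B_V,
\]
with $\Pi_{\mathrm c}(m)$ the set of partitions for which no proper initial segment is a union of blocks, is \emph{not} what \cite{Arizmendi2015} proves. Their classical-to-Boolean relation (Corollary~1.6 there, used in the paper) is a sum over \emph{permutations} $\sigma\in\Sn$ with $\sigma(1)=n$, weighted by the product over descending runs. When you collect terms by the partition $\runs(\sigma)$, the multiplicities are not all~$1$: for instance, with $n=6$ the connected partition $\{\{1,6\},\{2,5\},\{3,4\}\}$ arises from both $\sigma=615243$ and $\sigma=614352$. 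Hence your identity $\mathsf k(j_1,\dots,j_m)=\sum_{\pi\in\Pi_{\mathrm c}(m)}\prod_{V}\mathsf b(j_V)$ is false in general, and the bijection you propose---sending $T\in\DBPT(j_1,\dots,j_m)$ to $(\pi_T,\text{branches})$ with each fiber ``parametrized exactly by the choice of branches''---cannot exist: the fibers carry extra structure recording which permutation you came from.

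The paper handles this by building a bijection
\[
\Phi\colon\{(\sigma,(T_U)_{U\in\runs(\sigma)}):\sigma\in\Sn,\ \sigma(1)=n\}\ \longrightarrow\ \DBPT(i_1,\dots,i_n),
\]
not to pairs (partition, branches). Concretely, one forms the reverse-Motzkin tree $\alpha^{-1}(\sigma^\star)$ (where $\sigma^\star$ deletes the leading $n$) and then swings at the one-child vertices according to the chosen branches $T_U$; the permutation $\sigma$ itself, not just $\runs(\sigma)$, is needed to recover the tree. Your reference to \cref{sec:peaks} is pointing in the right direction, but the descending-run data it extracts is permutation-level, not partition-level, and that distinction is exactly what your argument elides.
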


\begin{proof}
    For any subset $U = \{u_1, \ldots, u_k\} \subseteq [n]$ with $u_1 < \cdots < u_k$, let $i_U=(i_{u_1}, \ldots, i_{u_k})$.

    By \eqref{eq:classical}, \eqref{eq:free}, and \eqref{eq:Boolean}, we have $K_1(X) = R_1(X) = B_1(X) = \varphi(X)$ for all $X \in \A$. Therefore (since $\ttt(\varnothing) = 0$), each of \eqref{item1}, \eqref{item2}, and \eqref{item3} implies that ${K_1(X_i) = R_1(X_i) = B_1(X_i) = \varphi(X_i) = 0}$ for all $i \in I$. Hence, we may assume in what follows that $K_1(X_i) = R_1(X_i) = B_1(X_i) = \varphi(X_i) = 0$ for all $i \in I$.
    
    First, we will show that \eqref{item2} is equivalent to \eqref{item3}. Fix $n\geq 1$ and $i_1,\ldots,i_n\in I$. By \cite[Equation~1.4]{Arizmendi2015}, we have that
    \begin{equation}\label{eq:negative-r}
    -R_n(X_{i_1}, \ldots, X_{i_n}) = \sum_{\pi \in \NC_{\text{irr}}(n)} \prod_{U \in \pi}(-B_U(X_{i_1}, \ldots, X_{i_n})).
    \end{equation}
    Therefore, it suffices to show that
    \begin{equation}\label{eq:bpt-branch}
        \sum_{T \in \BPT(i_1, \ldots, i_n)} \ttt(T) = \sum_{\pi \in \NC_{\text{irr}}(n)} \prod_{U \in \pi} \sum_{T \in \Branch(i_U)} \ttt(T).
    \end{equation}
    Let $\NC_{\text{irr}}^{\geq 2}(n)$ be the set of all irreducible noncrossing partitions of $[n]$ whose blocks all have cardinality at least $2$. Observe that the outer summand on the right-hand side of \eqref{eq:bpt-branch} vanishes for $\pi \not \in \NC_{\text{irr}}^{\geq 2}(n)$. Let $P(i_1,\ldots,i_n)$ be the set of pairs $(\pi, (T_U)_{U \in \pi})$ with $\pi \in \NC_{\text{irr}}^{\geq 2}(n)$ and $T_U \in \Branch(i_U)$ for all $U \in \pi$. The right-hand side of \eqref{eq:bpt-branch} can be expanded to
    \[\sum_{(\pi, (T_U)_{U \in \pi}) \in P(i_1, \ldots, i_n)} \prod_{U \in \pi} \ttt(T_U).\]
    Therefore, to prove \eqref{eq:bpt-branch}, it suffices to exhibit a bijection $\Psi\colon P(i_1,\ldots,i_n)\to\BPT(i_1, \ldots, i_n)$ such that if $\Psi(\pi, (T_U)_{U \in \pi})=T$, then the multiset of branches $T_U$ for $U\in\pi$ is $\IF(T)$.
    
    Let $(\pi, (T_U)_{U \in \pi}) \in P(i_1, \ldots, i_n)$. For each $U\in\pi$, we may assume that the vertex set of the branch $T_U$ is $U\setminus\max U$ and that each non-root vertex of $T_U$ is smaller than its parent. The binary plane tree $\Psi(\pi, (T_U)_{U \in \pi}) \in \BPT(i_1, \ldots, i_n)$ can be constructed as follows.
    \begin{itemize}
        \item The vertices of $\Psi(\pi, (T_U)_{U \in \pi})$ are the integers $1, \ldots, n-1$.
        \item Let $j \in [n - 1]$, and let $U \in \pi$ be the block containing $j$.
        \begin{itemize}
            \item If $j = \min U$, then $j$ is a leaf of $\Psi(\pi, (T_U)_{U \in \pi})$.
            \item 
            If $\min U<j<\max U$, then $j-1$ is the unique child of $j$ in $\Psi(\pi, (T_U)_{U \in \pi})$; moreover, $j-1$ is a left (respectively, right) child of $j$ in $\Psi(\pi, (T_U)_{U \in \pi})$ if and only if $j$ has a left (respectively, right) child in $T_U$.  
            \item If $j = \max U$, then $j$ has two children in $\Psi(\pi, (T_U)_{U \in \pi})$; the right child of $j$ is $j-1$, and the left child of $j$ is $\min U - 1$.
        \end{itemize}
        \item The $I$-coloring is given by $\chi(j) = i_j$ for $j \in [n-1]$ and $\chi(\boxminus) = i_n$. 
    \end{itemize}
    We need to check that $\Psi(\pi, (T_U)_{U \in \pi})$ is actually a binary plane tree whose postorder agrees with the usual total order on $[n]$. To do so, we describe a recursive procedure for constructing $\Psi(\pi, (T_U)_{U \in \pi})$. Let $U_1,\ldots,U_r$ be the blocks of $\pi$, listed so that $\min U_1<\cdots<\min U_r$. For $2\leq i\leq r$, let $v_i=\min U_i-1$. Let $\Upsilon_1=T_{U_1}$. For $2\leq i\leq r$, let $\Upsilon_i=\nabla_{v_i}(\Upsilon_{i-1},T_{U_i})$, and rename the new vertex $v_i^*$ created during this insertion as $\max U_i$ (so $\max U_i$ is the parent of $\min U_i-1$ in $\Upsilon_i$). It is straightforward to check that $\Psi(\pi, (T_U)_{U \in \pi})=\Upsilon_r$. The fact that $\Upsilon_r$ is a binary plane tree whose postorder agrees with the usual total order on $[n]$ follows from the definition of insertion and the fact that $\pi$ is an irreducible noncrossing partition.  
    
    Inversely, let $T \in \BPT(i_1, \ldots, i_n)$. We will now describe $\Psi^{-1}(T) \in P(i_1, \ldots, i_n)$. Using the notation from \cref{sec:insertion}, we have that $\pi_T = \{\gamma^{-1}(v) : v \in \Lambda_T\}$ is a partition of $T \cup \{\boxminus\}$. Moreover, the blocks of $\pi_T$ correspond to the insertion factors of $T$. By mapping $T$ to $[n-1]$ using the postorder labeling and mapping $\boxminus$ to $n$, we may transport $\pi_T$ to a partition $\pi \in \Pi(n)$. Again, each part $U \in \pi$ has a corresponding insertion factor $T_U \in \Branch(I)$. Let $\Psi^{-1}(T) = (\pi, (T_U)_{U \in \pi})$. The fact that $\pi$ is noncrossing is immediate from the definition of the postorder and the definition of $\gamma$. Because each insertion factor contains at least one vertex together with the symbol $\boxminus$, every block of $\pi$ has cardinality at least $2$. To see that $\pi$ is irreducible, note that if $v$ is the first vertex in the postorder of $T$, then $v$ cannot be in the right subtree of a vertex with two children, so $v$ and $\boxminus$ are both in $\gamma^{-1}(\boxminus)$.  

    The functions $\Psi$ and $\Psi^{-1}$ described above are inverses. Therefore, $\Psi$ is a bijection. It follows directly from the definition of $\Psi^{-1}$ that if $\Psi(\pi, (T_U)_{U \in \pi})=T$, then the multiset of branches $T_U$ for $U\in\pi$ is $\IF(T)$. This proves \eqref{eq:bpt-branch}, so \eqref{item2} is equivalent to \eqref{item3}.

    Finally, we will show that \eqref{item1} is equivalent to \eqref{item3}. Once again, fix $n\geq 1$ and $i_1,\ldots,i_n\in I$. 
    By \cite[Corollary~1.6]{Arizmendi2015}, we have
    \begin{equation}
        -K_n(X_{i_1}, \ldots, X_{i_n}) = \sum_{\substack{\sigma \in \Sn \\ \sigma(1) = n}} \prod_{U \in \runs(\sigma)} (- B_U(X_{i_1}, \ldots, X_{i_n})).
    \end{equation}
    Therefore, it suffices to show that 
    \begin{equation}\label{eq:dbpt-branch}
        \sum_{T \in \DBPT(i_1, \ldots, i_n)} \ttt(T) = \sum_{\substack{\sigma \in \Sn \\ \sigma(1) = n}} \prod_{U \in \runs(\sigma)} \sum_{T \in \Branch(i_U)} \ttt(T).
    \end{equation}
    Let $\Dn$ be the set of $\sigma \in \Sn$ with $\sigma(1) = n$ such that all the blocks of $\runs(\sigma)$ have cardinality at least $2$. Observe that the outer summand on the right-hand side of \eqref{eq:dbpt-branch} vanishes for $\sigma \not \in \Dn$. Let $Q(i_1, \ldots, i_n)$ be the set of pairs $(\sigma, (T_U)_{U \in \runs(\sigma)})$ with $\sigma \in \Dn$ and $T_U \in \Branch(i_U)$ for all $U \in \runs(\sigma)$. The right-hand side of \eqref{eq:dbpt-branch} can be expanded to
    \[\sum_{(\sigma, (T_U)_{U \in \runs(\sigma)}) \in Q(i_1, \ldots, i_n)}\, \prod_{U \in \runs(\sigma)} \ttt(T_U).\]
    Therefore, to prove \eqref{eq:dbpt-branch}, it suffices to exhibit a bijection $\Phi \colon Q(i_1, \ldots, i_n) \to \DBPT(i_1, \ldots, i_n)$ such that if $\Phi(\sigma, (T_U)_{U \in \runs(\sigma)}) = T$, then the multiset of branches $T_U$ for $U \in \runs(\sigma)$ is $\IF(T)$.

    Let $(\sigma, (T_U)_{U \in \runs(\sigma)}) \in Q(i_1, \ldots, i_n)$. For each $U\in\runs(\sigma)$, we may assume that the vertex set of the branch $T_U$ is $U\setminus\max U$ and that each non-root vertex of $T_U$ is smaller than its parent. The decreasing binary plane tree $\Phi(\sigma, (T_U)_{U \in \runs(\sigma)})$ can be constructed as follows. First, let $\sigma^\star \in \mathfrak{S}_{n - 1}$ be the permutation formed by deleting the entry $n$ (i.e., the first entry) from $\sigma$. Recall the bijection $\alpha \colon \DBPT_{n - 1} \to \mathfrak{S}_{n-1}$ from \cref{subsec:bpt}. Let $\alpha^{-1}(\sigma^*)=(T,\L)$. We may form a decreasing binary plane tree ${\tilde{\Phi}(\sigma, (T_U)_{U \in \runs(\sigma)}) \in \DBPT(i_1, \ldots, i_n)}$ from $(T,\L)$ by defining a coloring $\chi$ of $T$ such that $\chi(\boxminus) = i_n$ and such that $\chi(v) = i_{\L(v)}$ for all $v\in T$. Because $\sigma$ has no descending runs of length $1$, the tree $\tilde{\Phi}(\sigma, (T_U)_{U \in \runs(\sigma)})$ is a \dfn{reverse Motzkin tree}; that is, every vertex with a left child also has a right child. We will form $\Phi(\sigma, (T_U)_{U \in \runs(\sigma)})$ by modifying $\tilde{\Phi}(\sigma, (T_U)_{U \in \runs(\sigma)})$ as follows. Note that the non-leaf vertices of the branches $(T_U)_{U \in \runs(\sigma)}$ correspond to the vertices of $\tilde{\Phi}(\sigma, (T_U)_{U \in \runs(\sigma)})$ with exactly one child. For each vertex of one of the branches $(T_U)_{U \in \runs(\sigma)}$ that has a left child, swing $\tilde{\Phi}(\sigma, (T_U)_{U \in \runs(\sigma)})$ at the corresponding vertex. Let $\Phi(\sigma, (T_U)_{U \in \runs(\sigma)})$ be the result of performing all these swings.

    Inversely, let $T \in \DBPT(i_1, \ldots, i_n)$. We will now describe $\Phi^{-1}(T) \in Q(i_1, \ldots, i_n)$. First, let $\tilde{T}$ be the tree formed by swinging $T$ at each vertex with a left child but no right child. (Clearly, $\tilde{T}$ is a reverse Motzkin tree.) Let $\sigma$ be the result of appending $n$ to the beginning of $\alpha(\tilde{T})$. It is easy to verify that the vertices of $T$ with exactly one child are precisely those vertices with labels belonging to $U \setminus \{\min U, \max U\}$ for some $U \in \runs(\sigma)$. For each $U \in \runs(\sigma)$, form the branch $T_U \in \Branch(i_U)$ on the vertex set $U \setminus \{\max U\}$ by placing the elements of $U \setminus \{\max U\}$ in decreasing order from top to bottom. For each $k \in U \setminus \{\min U, \max U\}$, the vertex $k \in T_U$ has a left (respectively, right) child if and only if the vertex labeled $k$ in $T$ has a left (respectively, right) child. Let $\Phi^{-1}(T) = (\sigma, (T_U)_{U \in \runs(\sigma)})$.

    The functions $\Phi$ and $\Phi^{-1}$ described above are inverses. Therefore, $\Phi$ is a bijection. It follows directly from the definition of $\Phi^{-1}$ that if $\Phi(\sigma, (T_U)_{U \in \runs(\sigma)})=T$, then the multiset of branches $T_U$ for $U\in\pi$ is $\IF(T)$. This proves \eqref{eq:dbpt-branch}, so \eqref{item1} is equivalent to \eqref{item3}. This completes the proof.
\end{proof} 

\begin{example}\label{exam:Psi}
Let us illustrate the map $\Psi$ from the proof of \cref{thm:big_theorem}. Let $n=14$. Let $I$ be a $3$-element set whose elements are represented by the colors {\color{Cyan}cyan}, {\color{Yellow}yellow}, and {\color{Pink}magenta}. Let 
\[\pi=\{\{1,11,14\},\{2,3,8,9,10\},\{4,5,6,7\},\{12,13\}\}\in\NC_{\text{irr}}^{\geq 2}(14)\] be the partition shown on the top left of \cref{fig:Psi}, and let $i_j$ be the color of the circle containing the number $j$ in that drawing of $\pi$. Let $(T_U)_{U\in\pi}$ be as shown on the bottom left of \cref{fig:Psi}. Then $\Psi(\pi,(T_U)_{U\in\pi})$ is shown on the right of \cref{fig:Psi}.  
\end{example}

\begin{figure}
  \begin{center}{\includegraphics[height=9.794cm]{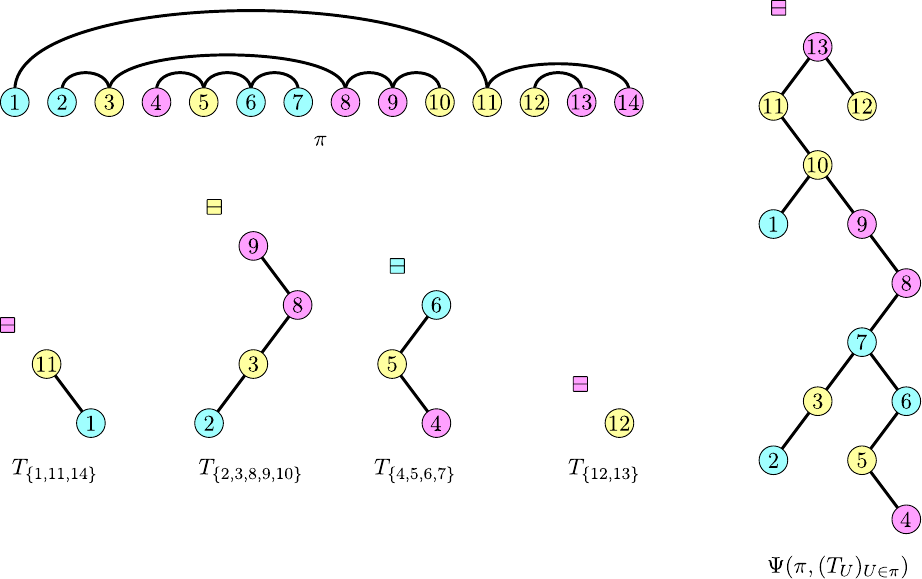}}
  \end{center}
\caption{On the left is a pair $(\pi,(T_U)_{U\in\pi})$, and on the right is its image under the map $\Psi$. Each number $j\in[14]$ is drawn in $\pi$ in a circle of color $i_j$.}\label{fig:Psi}
\end{figure}

\begin{example}\label{exam:Phi}
Let us illustrate the map $\Phi$ from the proof of \cref{thm:big_theorem}. Let $n=14$. Let $I$ and $(i_1,\ldots,i_{14})$ be as in \cref{exam:Psi}. Let \[\sigma=14\,\,\,6\,\,\,5\,\,\,9\,\,\,1\,\,\,13\,\,\,12\,\,\,10\,\,\,4\,\,\,2\,\,\,11\,\,\,8\,\,\,7\,\,\,3\] be the permutation shown on the top of \cref{fig:Phi}. Note that \[\runs(\sigma)=\{\{5,6,14\},\{1,9\},\{2,4,10,12,13\},\{3,78,11\}\}.\] Let $(T_U)_{U\in\runs(\sigma)}$ be as shown just below $\sigma$ in \cref{fig:Phi}. The reverse Motzkin tree $\tilde\Phi(\pi,(T_U)_{U\in\runs(\sigma)})$ is shown on the bottom left of \cref{fig:Phi}. Finally, the decreasing binary plane tree $\Phi(\pi,(T_U)_{U\in\runs(\sigma)})$ is shown on the bottom right of \cref{fig:Phi}. 
\end{example}

\begin{figure}
  \begin{center}{\includegraphics[height=14.717cm]{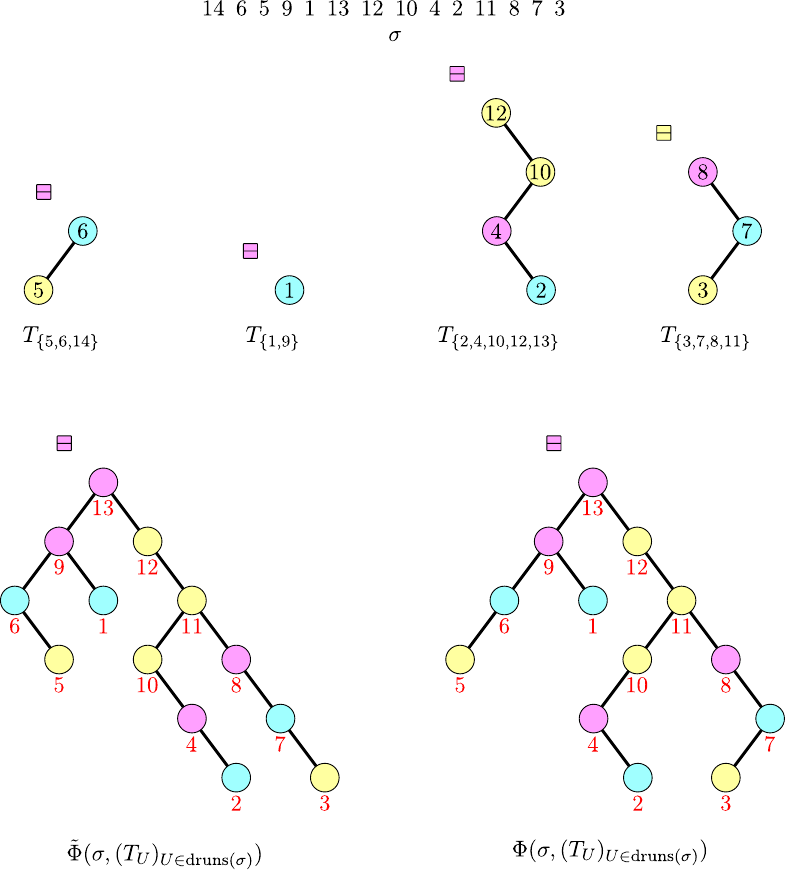}}
  \end{center}
\caption{On the top is a pair $(\sigma,(T_U)_{U\in\runs(\sigma)})$, and on the bottom are the trees $\tilde\Phi(\sigma,(T_U)_{U\in\runs(\sigma)})$ and $\Phi(\sigma,(T_U)_{U\in\runs(\sigma)})$. }\label{fig:Phi}
\end{figure}

\section{The Troupe Transform}\label{sec:troupe_transform} 
Let $\mathbf{T} \subseteq \BPT(I)$ be a troupe. For $n \geq 1$, define $\omega_n = |\mathbf{T} \cap \Branch_n|$ and $\widecheck{\omega}_n = |\mathbf{T} \cap \BPT_n|$.
The \dfn{troupe transform}, defined by Defant in 2022, is a function that transforms the sequence $(\omega_n)_{n \geq 1}$ to the sequence $(\widecheck{\omega}_n)_{n \geq 1}$ \cite[Section~2.5]{Defant2022}. In this section, we will show how to compute the troupe transform. 

By \cref{prop:if}, every sequence of nonnegative integers can arise as the sequence $(\omega_n)_{n \geq 1}$ for some choice of the troupe $\mathbf{T}$. Hence, to compute the troupe transform, it suffices to compute the sequence $(\widecheck{\omega}_n)_{n \geq 1}$ from $(\omega_n)_{n \geq 1}$. In fact, we will do so in the generality of weighted troupes.

\begin{theorem}\label{thm:troupe-transform}
    Let $\ttt \colon \BPT \to \mathbb K$ be a weighted troupe. For $n \geq 1$, let \[\omega_n = \sum_{T \in \Branch_n} \ttt(T)\quad\text{and}\quad\widecheck{\omega}_n = \sum_{T \in \BPT_n} \ttt(T).\]
    Then, the generating functions 
    \[\mathscr{B}(t) = \sum_{n \geq 1} \omega_n t^n\quad\text{and}\quad \mathscr T(t) = \sum_{n \geq 1} \widecheck{\omega}_n t^n\] satisfy \[\mathscr T(t) = \mathscr B\left(\frac{t}{1 - t\mathscr T(t)}\right).\]
\end{theorem}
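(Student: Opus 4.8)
The plan is to prove the functional equation by exhibiting a combinatorial decomposition of binary plane trees that mirrors the substitution $\mathscr{B}(t) \mapsto \mathscr{B}\!\left(\frac{t}{1-t\mathscr{T}(t)}\right)$, and then to read off generating functions. The engine is \cref{prop:if} (equivalently \cref{thm:JordanHolder}), which gives $\ttt(T) = \prod_{T' \in \IF(T)} \ttt(T')$, so that the weight of a tree factors through its insertion factors. I would first rewrite the claim as $\mathscr{T}(t) = \mathscr{B}(\mathscr{W}(t))$, where $\mathscr{W}(t) = \frac{t}{1 - t\mathscr{T}(t)}$; since $\mathscr{T}(t)$ has no constant term, $\mathscr{W}(t)$ is a well-defined formal power series with $\mathscr{W}(t) = t + O(t^2)$, so the composition $\mathscr{B}(\mathscr{W}(t))$ makes sense formally.

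The key step is a bijection
\[\BPT \setminus \{\varnothing\} \;\longleftrightarrow\; \Big\{\big(B, (S_b)_{b \in B}\big)\Big\},\]
where $B$ ranges over branches and, for each vertex $b$ of $B$, $S_b = (S_{b,1}, \ldots, S_{b,m_b})$ is a finite (possibly empty) sequence of nonempty binary plane trees. Given such data, I reconstruct a tree $T$ by taking $B$ and, at each vertex $b$, successively inserting $S_{b,1}, \ldots, S_{b,m_b}$ at $b$ via the operation $\nabla_b$. Each such insertion creates exactly one new vertex with two children (the vertex $v^*$) and attaches the corresponding $S_{b,j}$ as its right subtree, so the $m_b$ insertions at $b$ produce a chain of $m_b$ two-children vertices stacked above $b$ along left edges, each carrying one $S_{b,j}$ to its right. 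In the other direction, given $T$, I recover $B$ as the insertion factor $T_{\boxminus}$ associated with the block $\gamma^{-1}(\boxminus)$ of $\pi_T$: its vertices are exactly the vertices with at most one child that do not lie in the right subtree of any two-children vertex, none of the inserted material qualifies, and the remaining insertion factors, read off along $B$, furnish the sequences $(S_b)_{b \in B}$. By \cref{prop:if}, $\IF(T)$ is the disjoint union of $\{B\}$ and the $\IF(S_{b,j})$, whence $\ttt(T) = \ttt(B)\prod_{b,j}\ttt(S_{b,j})$.

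Granting the bijection, the generating-function computation is short. Fix a branch $B$ of size $k$. Summing over all choices of the sequences $(S_b)_{b \in B}$, each vertex $b$ contributes independently a factor $t \cdot \sum_{m \ge 0}\big(\sum_{S \neq \varnothing} \ttt(S)\, t^{|S|+1}\big)^{m}$: the leading $t$ records $b$ itself, while each inserted tree $S$ contributes $t$ for its new two-children vertex together with $\ttt(S)t^{|S|}$ for $S$ itself. Since $\sum_{S \neq \varnothing}\ttt(S)t^{|S|} = \mathscr{T}(t)$, this factor is exactly $\frac{t}{1 - t\mathscr{T}(t)} = \mathscr{W}(t)$. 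Multiplying over the $k$ vertices of $B$ and weighting by $\ttt(B)$ gives the total contribution $\ttt(B)\,\mathscr{W}(t)^{k}$. Summing over all branches and grouping by size yields
\[\sum_{T \in \BPT \setminus \{\varnothing\}} \ttt(T)\, t^{|T|} \;=\; \sum_{k \ge 1}\Big(\sum_{B \in \Branch_k}\ttt(B)\Big)\mathscr{W}(t)^{k} \;=\; \sum_{k \ge 1}\omega_k\,\mathscr{W}(t)^{k} \;=\; \mathscr{B}(\mathscr{W}(t)),\]
which is the desired identity.

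The main obstacle is establishing the bijection rigorously, and in particular verifying that $T_{\boxminus}$ is recovered correctly together with the left/right edge data of $B$. The delicate points are: (a) that inserting a sequence of trees at a fixed vertex $b$ is well defined and produces precisely the stacked chain of two-children vertices above $b$, so that the map from data to trees is injective; (b) that in the reconstructed tree the original vertices of $B$ are exactly the vertices with at most one child that avoid the right subtree of every two-children vertex (hence $B = T_{\boxminus}$ as an unlabeled branch, edge orientations included), so that no inserted vertex is mistaken for a backbone vertex and vice versa; and (c) that the sequence order at each $b$ is faithfully encoded by the top-to-bottom order of the inserted two-children vertices. Each of these follows from unwinding the definitions of $\nabla$ and of $\gamma$ from \cref{sec:insertion}, but care is needed because an insertion at $b$ always attaches via a left edge regardless of how $b$ sits inside $B$; checking that this nonetheless preserves and reconstructs the edge orientations of $B$ is the crux.
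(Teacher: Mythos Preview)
Your approach is correct and gives a self-contained combinatorial proof that differs substantially from the paper's argument. The paper proves \cref{thm:troupe-transform} by invoking \cref{thm:big_theorem}: it chooses a noncommutative probability space $(\mathbb{K}[X],\varphi)$ so that the Boolean cumulants of $X$ satisfy $-B_n(X,\ldots,X)=\omega_{n-1}$, deduces from \cref{thm:big_theorem} that the free cumulants satisfy $-R_n(X,\ldots,X)=\widecheck{\omega}_{n-1}$, and then substitutes into a known identity of Arizmendi--Hasebe--Lehner--Vargas relating the Boolean and free cumulant generating functions. Your argument instead builds the functional equation directly, decomposing each nonempty $T$ into its outermost branch $B=T_\boxminus$ together with, at each vertex $b\in B$, the sequence of right subtrees hanging off the chain of two-children vertices stacked above $b$ by left edges, and then reading off $\mathscr{T}(t)=\mathscr{B}(\mathscr{W}(t))$ from the multiplicativity $\ttt(T)=\ttt(B)\prod_{b,j}\ttt(S_{b,j})$. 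Your route avoids both \cref{thm:big_theorem} and the external free-probability identity, and makes the combinatorial content of the substitution $t\mapsto t/(1-t\mathscr{T}(t))$ transparent; the paper's route is shorter once those two inputs are in hand and underscores that the troupe transform is literally the Boolean-to-free cumulant passage in disguise.

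One point of imprecision worth fixing: in your description of the inverse map you say that ``the remaining insertion factors, read off along $B$, furnish the sequences $(S_b)_{b\in B}$.'' This is not quite right. The insertion factors $T_v$ for $v\in\Lambda_T\setminus\{\boxminus\}$ are branches, whereas the $S_{b,j}$ you need are the full right subtrees of the two-children vertices lying on the left-edge chain immediately above $b$ (and not themselves in the right subtree of any other two-children vertex). Concretely, the correct inverse is: for each $b\in T_\boxminus$, walk upward from $b$ as long as the current vertex is the \emph{left} child of a two-children parent; the parents encountered are the $v^*$'s for $b$, and their right subtrees, read from top to bottom, are $S_{b,1},\ldots,S_{b,m_b}$. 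Your size and weight bookkeeping ($|T|=|B|+\sum_{b,j}(|S_{b,j}|+1)$ and $\ttt(T)=\ttt(B)\prod_{b,j}\ttt(S_{b,j})$) is correct, so this is only a wording issue, but since you flag precisely this step as ``the main obstacle'' it is worth stating the inverse accurately.
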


\begin{proof}
    Let $\A = \mathbb K[X]$. We may then choose the functional $\varphi \colon \A \to \mathbb K$ in such a way that $\varphi(X) = 0$ and, for all $n \geq 2$, we have
    \begin{equation}\label{eq:bomega}-B_n(X, \ldots, X) = \omega_{n - 1}\end{equation} in the noncommutative probability space $(\A, \varphi)$.
    By \cref{thm:big_theorem}, we have
    \begin{equation}\label{eq:romega}-R_n(X, \ldots, X) = \widecheck{\omega}_{n - 1}\end{equation} for all $n \geq 2$.

     By \cite[Equation~2.16]{Arizmendi2015}, we have \begin{equation}\label{eq:br}1 - B\left(\frac{t}{1 + R(t)}\right) = \frac{1}{1 + R(t)},\end{equation} where \[B(t) = \sum_{n \geq 1} B_n(X, \ldots, X) t^n\] is the ordinary generating function for the Boolean cumulants of $X$ and \[R(t) = \sum_{n \geq 1} R_n(X, \ldots, X) t^n\] is the ordinary generating function for the free cumulants of $X$. It follows from \eqref{eq:bomega} and \eqref{eq:romega} that $B(t) = - t \mathscr{B}(t)$ and $R(x) = -t \mathscr{T}(t)$; substituting into \eqref{eq:br} yields the desired result.
\end{proof}

\begin{corollary}\label{cor:algebraic}
    Let $\ttt \colon \BPT \to \mathbb K$ be a weighted troupe. For $n \geq 1$, let \[\omega_n = \sum_{T \in \Branch_n} \ttt(T)\quad\text{and}\quad\widecheck{\omega}_n = \sum_{T \in \BPT_n} \ttt(T).\]
    Let \[\mathscr B(t) = \sum_{n \geq 1} \omega_n t^n\quad\text{and}\quad \mathscr T(t) = \sum_{n \geq 1} \widecheck{\omega}_n t^n.\] Then $\mathscr B(x)$ is algebraic over $\mathbb K[t]$ if and only if $\mathscr T(t)$ is. 
\end{corollary}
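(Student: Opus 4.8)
The plan is to derive both implications from the functional equation of \cref{thm:troupe-transform}, namely
\[
\mathscr T(t)=\mathscr B\!\left(\frac{t}{1-t\,\mathscr T(t)}\right),
\]
by observing that the substitution hidden inside $\mathscr B$ is an \emph{invertible} change of variables for formal power series. Write $u=u(t)=\frac{t}{1-t\,\mathscr T(t)}$. Since $\mathscr T(t)$ has zero constant term, $1-t\,\mathscr T(t)$ is a unit in $\mathbb K[[t]]$, so $u(t)=t+O(t^2)$ and $u$ has a compositional inverse. Solving the functional equation for $t$ turns it into the symmetric pair of relations
\[
\mathscr T(t)=\mathscr B\bigl(u(t)\bigr),\qquad u(t)=\frac{t}{1-t\,\mathscr T(t)},\qquad t(u)=\frac{u}{1+u\,\mathscr B(u)} .
\]
Thus $\mathscr B$ and $\mathscr T$ are obtained from each other by one and the same rational change of variables, and the two directions of the corollary become mirror images of one another.

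For the implication ``$\mathscr B$ algebraic $\Rightarrow$ $\mathscr T$ algebraic'', I would take a nonzero polynomial $P(X,Y)=\sum_{j=0}^e c_j(Y)\,X^j\in\mathbb K[X,Y]$ with $\deg_X P=e$ and $P(t,\mathscr B(t))=0$. Substituting the zero-constant-term series $u(t)$ for $X$ and using $\mathscr B(u(t))=\mathscr T(t)$ gives $P\bigl(u(t),\mathscr T(t)\bigr)=0$; multiplying by $(1-t\,\mathscr T(t))^e$ to clear denominators yields the polynomial identity $Q\bigl(t,\mathscr T(t)\bigr)=0$, where
\[
Q(t,z)=\sum_{j=0}^{e}c_j(z)\,t^{\,j}\,(1-tz)^{\,e-j}\in\mathbb K[t,z].
\]
Reading $Q$ as a polynomial in $z$ over $\mathbb K[t]$ then exhibits $\mathscr T(t)$ as algebraic over $\mathbb K[t]$, provided $Q\neq 0$. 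The converse implication is literally the same computation with the roles of $(t,\mathscr T)$ and $(u,\mathscr B)$ interchanged, using the inverse substitution $t(u)=\frac{u}{1+u\,\mathscr B(u)}$.

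The single nontrivial point, which I expect to be the main obstacle, is to check that clearing denominators does not collapse $P$ to the zero polynomial, i.e.\ that $Q\neq 0$. This is exactly where invertibility of the change of variables is used: the $\mathbb K$-algebra homomorphism sending $X\mapsto\frac{t}{1-tz}$ and $Y\mapsto z$ admits the inverse $t\mapsto\frac{X}{1+XY}$, $z\mapsto Y$, so it is injective and cannot annihilate the nonzero polynomial $P$; since $Q=(1-tz)^e\cdot P\!\left(\tfrac{t}{1-tz},z\right)$ is $(1-tz)^e$ times this nonzero image, we get $Q\neq 0$. I would phrase this inside the rational function field $\operatorname{Frac}(\mathbb K)(t,z)$, which is immediate when $\mathbb K$ is an integral domain; for a general commutative ring one reduces to that case (or inspects a surviving coefficient of $Q$ directly). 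Granting $Q\neq 0$, the relation $Q(t,\mathscr T(t))=0$ and its mirror image finish both directions.
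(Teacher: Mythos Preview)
Your proposal is correct. The forward direction (``$\mathscr B$ algebraic $\Rightarrow$ $\mathscr T$ algebraic'') matches the paper's proof almost verbatim: substitute $u(t)=\dfrac{t}{1-t\,\mathscr T(t)}$ into the algebraic relation for $\mathscr B$, then clear denominators. You are in fact more careful than the paper, which asserts without comment that the resulting polynomial $Q$ is nonzero; your observation that the substitution $X\mapsto \frac{t}{1-tz}$, $Y\mapsto z$ is invertible (with inverse $t\mapsto \frac{X}{1+XY}$, $z\mapsto Y$) justifies this cleanly, and it actually works over an arbitrary commutative ring if you phrase it inside the localization $\mathbb K[t,z][(1-tz)^{-1}]$ rather than passing to a fraction field---the point being that $1-tz$ and $1+XY$ are non-zero-divisors in any polynomial ring.

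For the converse, your route differs from the paper's. The paper observes that $\mathscr W(t)=\dfrac{t}{1-t\,\mathscr T(t)}$ is algebraic whenever $\mathscr T$ is, writes $\mathscr B=\mathscr T\circ \mathscr W^{\langle -1\rangle}$, and then invokes the ``well-known'' facts that compositional inverses and compositions of algebraic series are algebraic. You instead exploit the explicit inverse substitution $t(u)=\dfrac{u}{1+u\,\mathscr B(u)}$ and simply rerun the forward argument with the roles of $(\mathscr B,u)$ and $(\mathscr T,t)$ swapped. Your approach is more self-contained and symmetric, and it sidesteps the need to cite closure properties of algebraic power series (which are usually stated over a field rather than a general commutative ring); the paper's approach is shorter to write down but relies on those background facts.
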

\begin{proof}
    Suppose that $\mathscr B(t)$ is algebraic (over $\mathbb{K}[t]$). This means that there exists a nonzero bivariate polynomial $P$ with coefficients in $\mathbb K$ such that $P(t, \mathscr B(t)) = 0$. By \cref{thm:troupe-transform}, we have \[P\left(\frac{t}{1 - t \mathscr{T}(t)}, \mathscr{T}(t)\right) = P\left(\frac{t}{1 - t \mathscr{T}(t)}, \mathscr{B}\left(\frac{t}{1 - t \mathscr{T}(t)}\right)\right) = 0.\] By clearing the denominators in this equation, we find a nonzero bivariate polynomial $Q$ with coefficients in $\mathbb K$ such that $Q(t, \mathscr{T}(t)) = 0$. Therefore, $\mathscr{T}(t)$ is algebraic.

    Conversely, suppose that $\mathscr{T}(t)$ is algebraic. Then, so is the power series \[\mathscr{W}(t) = \frac{t}{1 - t \mathscr{T}(t)}.\] By \cref{thm:troupe-transform}, we have $\mathscr{T}(t) = \mathscr{B}(\mathscr W(t))$, so $\mathscr{B} = \mathscr{T} \circ \mathscr W^{\langle -1 \rangle}$, where $\mathscr W^{\langle -1 \rangle}$ denotes the compositional inverse of $\mathscr W$. It is well known that the compositional inverse of an algebraic power series is algebraic and that the composition of two algebraic power series is algebraic. Therefore, $\mathscr{B}$ is algebraic.
\end{proof}

\cref{thm:troupe-transform,cor:algebraic} answer \cite[Question~9.1]{Defant2022}. 

\section{Applications to Permutation Enumeration}\label{sec:peaks}  

Fix $i_1, \ldots, i_n\in I$, and recall that there is a bijection $\alpha\colon\DBPT(i_1, \ldots, i_n)\to\Sn$ that maps a decreasing binary plane tree $T$ to the permutation formed by reading the labels of the vertices of $T$ in inorder. Using this bijection, we can rewrite the sum from \cref{thm:big_theorem}\eqref{item1} as a sum over permutations rather than a sum over decreasing binary plane trees. This motivates us to understand the insertion factors of a decreasing binary plane tree $(T,\L)\in\DBPT(i_1,\ldots,i_n)$ in terms of the permutation $\alpha(T,\L)$. We consider each insertion factor as a decreasing binary plane tree, where the labeling is just the restriction of $\L$. 

The \dfn{plot} of a word $w$ over the alphabet $\mathbb Z$ is the diagram that depicts the points $(i,w(i))$ for all $i\in[n]$. It turns out that there is a simple way to read off the insertion factors of $\alpha^{-1}(w)$ from the plot of $w$. A \dfn{peak} of $w$ is an index $p\in\{2,\ldots,n-1\}$ such that $w(p-1)<w(p)>w(p+1)$. Let $p_1<\cdots<p_\ell$ be the peaks of $w$. For $1\leq j\leq \ell$, let $\sw_j(w)$ be the set of points in the plot of $w$ that lie weakly southeast of $(p_j,w(p_j))$ but do not lie weakly southeast of any of the points $(p_{j'},w(p_{j'}))$ for $j<j'\leq\ell$. Let $\sw_0(w)$ be the set of points in the plot of $w$ that do not belong to $\bigcup_{j=1}^\ell\sw_j(w)$. After normalizing, we can view each set $\sw_j(w)$ as the plot of a word $w_j$. Let $v_j$ be the vertex of $\alpha^{-1}(w)$ with label $w(p_j)$. Then $w_j$ is obtained by reading the labels of the insertion factor $T_{v_j}$ in inorder; this uniquely determines $T_{v_j}$. 

\begin{example}
\cref{fig:peak_diagram} shows the plot of the permutation 
\[w=15\,\,\,16\,\,\,10\,\,\,11\,\,\,6\,\,\,20\,\,\,18\,\,\,12\,\,\,1\,\,\,7\,\,\,13\,\,\,17\,\,\,8\,\,\,3\,\,\,2\,\,\,9\,\,\,5\,\,\,4\,\,\,14\,\,\,19.\] The $5$ peaks of this permutation are $p_1=2$, $p_2=4$, $p_3=6$, $p_4=12$, $p_5=16$. For $1\leq j\leq 5$, the region that is southeast of $(p_j,w(p_j))$ and not southeast of any of the points $(p_{j'},w(p_{j'}))$ for $j<j'\leq 5$ is shaded in some color. \cref{fig:big_insertion_factors} shows the tree $\alpha^{-1}(w)$ on the left and its insertion factors on the right.  
\end{example}

\begin{figure}
  \begin{center}{\includegraphics[width=\linewidth]{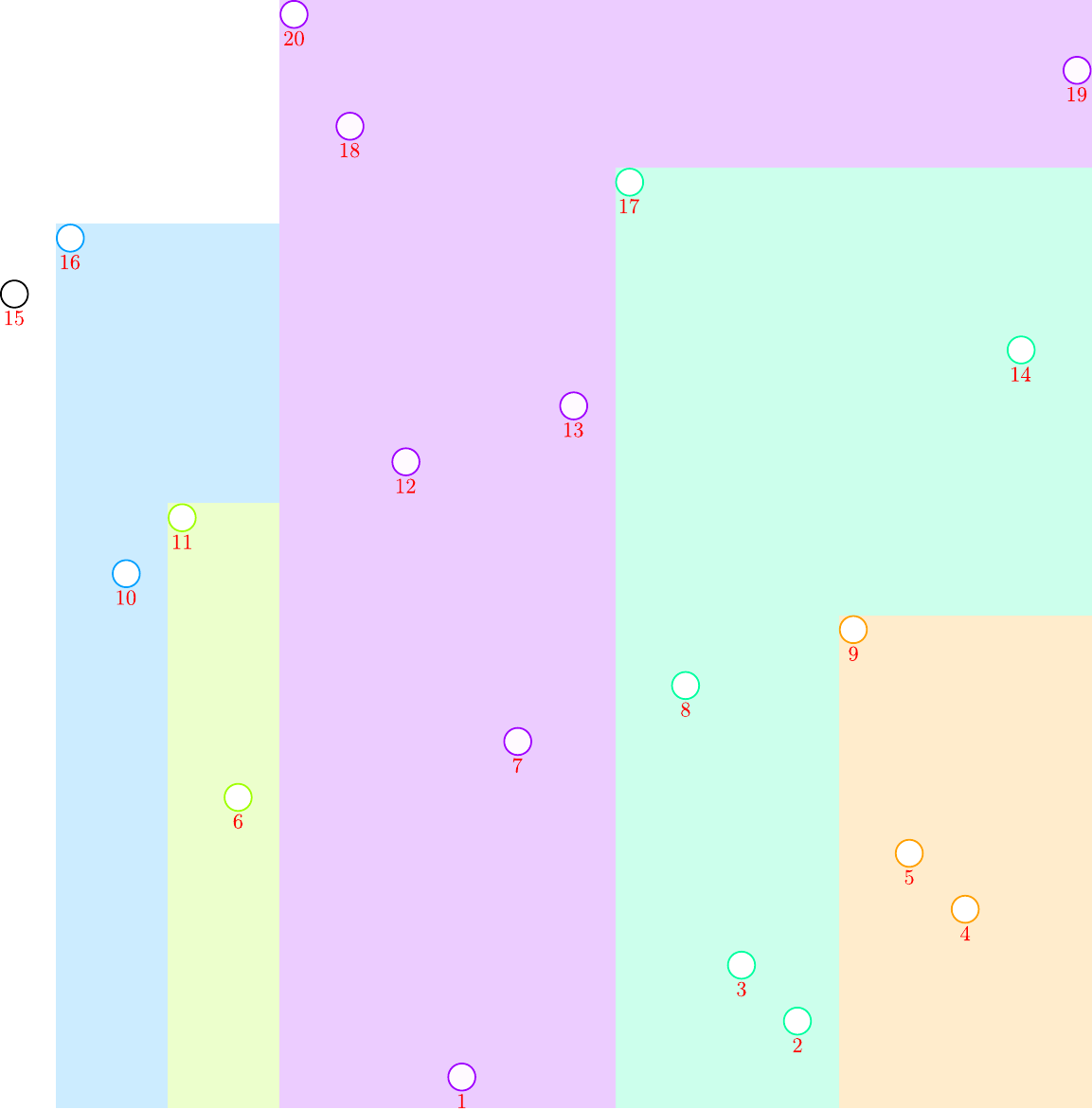}}
  \end{center}
\caption{The plot of a permutation $w$. Each set $\sw_j(w)$ for $j\geq 1$ consists of the points in the plot lying in one of the colored regions. The set $\sw_0(w)$ consists of the points in the plot that do not lie in any of the colored regions; in this example, $\sw_0(w)$ contains just the point $(1,15)$.}\label{fig:peak_diagram} 
\end{figure} 

\begin{figure}
  \begin{center}{\includegraphics[width=0.846\linewidth]{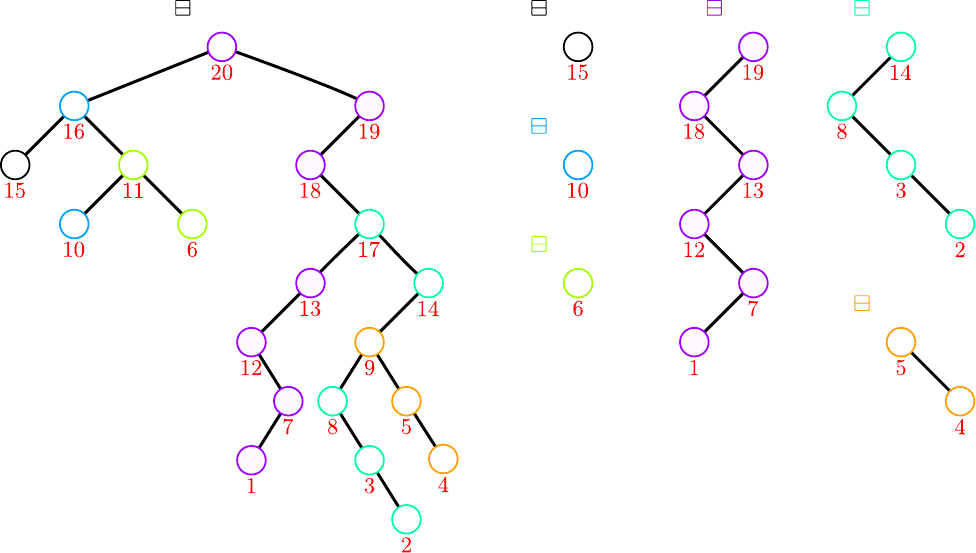}}
  \end{center}
\caption{A decreasing binary plane tree (left) and its insertion factors (right).}\label{fig:big_insertion_factors} 
\end{figure} 

\section{Cumulants from Distributions}\label{sec:distributions}
Let us use the setup of \cref{thm:big_theorem}. Suppose that $I = \{\star\}$ is a singleton, that $\mathbb K = \mathbb{R}$, and that $\A = \mathbb{R}[X]$, where $X = X_{\star}$. Further, for any rapidly decreasing (Schwartz) distribution $f$ on $\mathbb{R}$ with total mass $1$, define the linear functional $\varphi_f \colon \A \to \mathbb{R}$ by
\[\varphi_f(X^n) = \int_{-\infty}^\infty x^n f(x) \, dx\]
for all $n \geq 0$. Then $(\A, \varphi_f)$ is a commutative probability space. If $f$ is a probability distribution (i.e., it is nonnegative), then $(\A, \varphi_f)$ is genuinely a space of random variables.

When we refer to the $n$th classical cumulant of a distribution $f$, we mean the classical cumulant $K_n(X,\ldots,X)$ computed in the probability space $(\A,\varphi_f)$. We use the notation $\kappa_n(f)$ for this quantity. Similarly, we define the $n$th free and Boolean cumulants of the distribution $f$, denoted $r_n(f)$ and $b_n(f)$, respectively.

Classical cumulants satisfy the fundamental property that for all rapidly decreasing distributions $f, g$, we have \begin{equation}\label{eq:kappa-ast}\kappa_n(f \ast g) = \kappa_n(f) + \kappa_n(g),\end{equation} where $\ast$ denotes convolution.

We will consider some particular examples of weighted troupes $\ttt \colon \BPT \to \mathbb{R}$. For each such example, we attempt to exhibit a distribution $f$ such that the three conditions of \cref{thm:big_theorem} actually hold in the probability space $(\A, \varphi_f)$. In other words, we want the following three equivalent conditions to hold for all $n \geq 1$:
\begin{align}
    \label{eq:kappa}\kappa_n(f) &= - \sum_{T \in \DBPT_{n-1}} \ttt(T), \\
    \label{eq:r}r_n(f) &= - \sum_{T \in \BPT_{n-1}} \ttt(T), \\
    \label{eq:b}b_n(f) &= - \sum_{T \in \Branch_{n-1}} \ttt(T).
\end{align}
(We remind the reader that if $I = \{\star\}$ is a singleton, then the set $\DBPT(i_1, \ldots, i_n)$ appearing in the statement of \cref{thm:big_theorem} is $\DBPT_{n - 1}$, not $\DBPT_n$. Similarly, $\BPT(i_1, \ldots, i_n) = \BPT_{n - 1}$ and $\Branch(i_1, \ldots, i_n) = \Branch_{n - 1}$.) The distribution $f$ will not necessarily be a probability distribution.

\subsection{Binary Plane Trees}\label{subsec:table1}
Suppose that $\ttt$ is the indicator function of $\BPT \setminus \{\varnothing\}$. Then, for all $n \geq 1$, we have
\begin{align*}
\sum_{T \in \DBPT_{n}} \ttt(T) &= |\DBPT_n| = n!, \\
\sum_{T \in \BPT_{n}} \ttt(T) &= |\BPT_n| = C_{n}, \\ 
\sum_{T \in \Branch_{n}} \ttt(T) &= |\Branch_n| = 2^{n - 1}.
\end{align*}
Hence, the condition \eqref{eq:kappa} becomes
\[\kappa_n(f) = \begin{cases}-(n-1)! & \mbox{if $n > 1$} \\ 0 & \mbox{if $n = 1$}\end{cases}\] for all $n \geq 1$.
This is impossible if $f$ is a probability distribution. Indeed, if $f$ is a probability distribution, then $\kappa_2(f)$ is its variance, which is nonnegative and thus cannot be equal to $-1$. However, it is possible to construct a distribution $f$ satisfying \eqref{eq:kappa}, as follows.

Let
\[g(x)=\begin{cases}e^{-x+1} & \mbox{if $x \geq -1$} \\ 0 & \mbox{otherwise.}\end{cases}\] This is the standard exponential distribution shifted so that its mean is $0$. It is well known that the $n$th classical cumulant of the standard exponential distribution is $(n - 1)!$ \cite{Rota2000}, so 
\[\kappa_n(g) = \begin{cases}(n-1)! & \mbox{if $n > 1$} \\ 0 & \mbox{if $n = 1$.}\end{cases}\] Now, let \[f(x) = \delta(x-1) + \delta'(x-1),\] where $\delta$ denotes the Dirac delta distribution. It is not difficult to check that $f$ is the convolution inverse of $g$; i.e. $f \ast g = \delta$. By \eqref{eq:kappa-ast}, we have $\kappa_n(f) = - \kappa_n(g)$ for all $n \geq 1$, so $f$ indeed satisfies \eqref{eq:kappa}. By \cref{thm:big_theorem}, $f$ satisfies \eqref{eq:r} and \eqref{eq:b} as well.

%

Note that for every integer $k \geq 0$, the $k$-fold convolution $g^{\ast k}$ is the well known \emph{gamma distribution} with shape parameter $k$, shifted to have mean $0$. Thus, we may informally think of $f$ as the ``gamma distribution with shape parameter $-1$'', shifted to have mean $0$.

\subsection{Binary Plane Trees Weighted by Right Edges}\label{subsec:table2}
Fix a real number $q$. Consider the weighted troupe $\ttt\colon\BPT\to\mathbb R$ defined by 
$\ttt(T)=q^{\rr(T)+1}$ for all nonempty $T$, where $\rr(T)$ is the number of right edges in $T$. If $q=1$, then this coincides with the weighted troupe considered in \cref{subsec:table1}; hence, we will assume in what follows that $q\neq 1$. For all $n\geq 1$, we have 
\begin{align*}
\sum_{T \in \DBPT_{n}} \ttt(T) & = qA_n(q), \\
\sum_{T \in \BPT_{n}} \ttt(T) &= qN_{n}(q), \\ 
\sum_{T \in \Branch_{n}} \ttt(T) &= q(1+q)^{n-1},
\end{align*}
where $A_n(t)$ and $N_n(t)$ denote the $n$th \emph{Eulerian polynomial} and the $n$th \emph{Narayana polynomial}, respectively \cite[Example~6.3]{Defant2022}.

Let \[f(x) = \frac{\delta(x - q) - q \delta(x - 1)}{1 - q}.\] We can check that $f$ satisfies \eqref{eq:kappa} directly as follows.
The moment-generating function of $f$ is \[M_X(t) = \sum_{n \geq 0} \varphi_f(X^n) \frac{t^n}{n!} = \int_{-\infty}^\infty e^{tx} f(x)\, dx = \frac{e^{qt} - q e^t}{1 - q}.\] Therefore, by \eqref{eq:classical-egf}, we have \[\sum_{n \geq 0} \kappa_n(f) \frac{t^n}{n!} = \log\left(\frac{e^{qt} - q e^t}{1 - q}\right).\] From this, it is straightforward to check that 
\[\kappa_n(f) = \begin{cases} -q A_{n-1}(q) & \mbox{if $n > 1$} \\ 0 & \mbox{if $n = 1$,}\end{cases}\] so $f$ satisfies \eqref{eq:kappa}, as desired.  By \cref{thm:big_theorem}, $f$ satisfies \eqref{eq:r} and \eqref{eq:b} as well.

Suppose that $|q| < 1$. Motivated by \cref{subsec:table1}, we find that the convolution inverse of $f$ is
\[g(x) = (1 - q) \sum_{m \geq 0} q^m \delta(x + q - (1 - q)m).\] The distribution $g$ satisfies 
\begin{align*}
    \kappa_n(g) = \sum_{T \in \DBPT_{n - 1}} \tau(T) = \begin{cases} -q A_{n-1}(q) & \mbox{if $n > 1$} \\ 0 & \mbox{if $n = 1$}.\end{cases}
\end{align*}
If $0 \leq q < 1$, then $g$ is a shifted and rescaled geometric distribution with success probability $1 - q$.

\subsection{Full Binary Plane Trees} 
In each of the previous two examples (\cref{subsec:table1,subsec:table2}), we found two distributions $f$ and $g$ which were convolution inverses of each other. The distribution $f$ was chosen to satisfy \eqref{eq:kappa}; that is, 
\[\kappa_n(f) = - \sum_{T \in \DBPT_{n - 1}} \ttt(T)\] for all $n \geq 1$. (By \cref{thm:big_theorem}, $f$ satisfied \eqref{eq:r} and \eqref{eq:b} as well.) On the other hand, the distribution $g$ satisfied the equation \begin{equation}\label{eq:kappa-g}\kappa_n(g) = \sum_{T \in \DBPT_{n - 1}} \ttt(T)\end{equation} for all $n \geq 1$, which differs only in that it has no negative sign. 

Recall that a nonempty binary plane tree is \emph{full} if no vertex has exactly one child. Let $\FBPT$ be the troupe of full binary plane trees. Let $\ttt$ denote the indicator function of $\FBPT$. For all $n\geq 1$, we have 
\begin{align*}
\sum_{T \in \DBPT_{n}} \ttt(T) &= \begin{cases} \Tan_{n} & \mbox{if $n$ is odd} \\ 0 & \mbox{if $n$ is even},\end{cases} \\
\sum_{T \in \BPT_{n}} \ttt(T) &= \begin{cases}C_{(n - 1)/2} & \mbox{if $n$ is odd} \\ 0 & \mbox{if $n$ is even},\end{cases} \\ 
\sum_{T \in \Branch_{n}} \ttt(T) &= \begin{cases}1 & \mbox{if $n = 1$} \\ 0 & \mbox{otherwise},\end{cases}
\end{align*}
where $\Tan_{n}$ is the number of alternating permutations of size $n$ \cite[Example~4.8]{Defant2022}; the numbers $\Tan_1,\Tan_3,\Tan_5,\ldots$ are also known as \dfn{tangent numbers} since they are the coefficients of the Taylor series for the tangent function.

Let $\ttt$ be the indicator function of $\FBPT$. We will still be able to exhibit a distribution $g$ satisfying \eqref{eq:kappa-g}. However, in contrast to \cref{subsec:table1,subsec:table2}, the distribution $g$ does not have a convolution inverse, so there is no distribution $f$ satisfying \eqref{eq:kappa} (and by \cref{thm:big_theorem}, there is no distribution $f$ satisfying \eqref{eq:r} or \eqref{eq:b} either).

Let
\[g(x) = \frac{1}{\exp\left(\frac{\pi x}{2}\right) + \exp\left(-\frac{\pi x}{2}\right)};\] then $g$ is the hyperbolic secant distribution. It is well known \cite[Section~1.3]{Fischer2014} that the moment-generating function of $g$ is \[M_X(t) = \sum_{n \geq 0} \varphi_g(X^n) \frac{t^n}{n!} = \int_{-\infty}^{\infty} e^{tx} g(x) \, dx = \sec t,\] so by \eqref{eq:classical-egf}, we have \[\sum_{n \geq 0} \kappa_n(g) \frac{t^n}{n!} = \log(\sec t).\] Differentiating with respect to $t$, we have \[\sum_{n \geq 1} \kappa_{n}(g) \frac{t^{n - 1}}{(n - 1)!} = \frac{d}{dt}\log(\sec t) = \tan t,\] so $\kappa_n(g)$ is the coefficient of $t^{n - 1} / (n - 1)!$ in the Taylor series for $\tan t$. That is, \[\kappa_n(g) = \begin{cases} \Tan_{n} & \mbox{if $n$ is odd} \\ 0 & \mbox{if $n$ is even},\end{cases}\] so $g$ satisfies \eqref{eq:kappa-g} as desired.

It is unclear whether the hyperbolic secant distribution $g$ has any combinatorial meaning related to the troupe $\FBPT$.


\section*{Acknowledgments}
Colin Defant was supported by the National Science Foundation under Award No.\ 2201907 and by a Benjamin Peirce Fellowship at Harvard University.

\bibliographystyle{alphaurl}
\bibliography{main}

\end{document}